\numberwithin{equation}{section}
\newcommand\h[1]{\mkern2mu\widehat{\mkern-2mu#1}\mkern2mu}
\newcommand\lcheck{\mkern-2mu\raisebox{-1ex}{\huge$\check{}$}}
\newcommand\R{\mathbb{R}}
\newcommand\N{\mathbb{N}}
\newcommand\la{\langle}
\newcommand\ra{\rangle}
\newcommand{\G}{\mathbf{G}}
\newcommand\supp{\operatorname{supp}}
\renewcommand\div{\operatorname{div}}
\newtheorem{Thm}{Theorem}[section]
\newtheorem{Lemma}[Thm]{Lemma}
\newtheorem{Cor}[Thm]{Corollary}
\theoremstyle{remark}
\newtheorem{Rem}{Remark}[section]
\begin{document}

\title[weighted gradient inequalities and unique continuation] {Weighted gradient inequalities and unique continuation problems}
\author{Laura De Carli}
\address{L.~De Carli, Florida International University,
Department of Mathematics,
Miami, FL 33199, USA}
\email{decarlil@fiu.edu}
\author{Dmitry~Gorbachev}
\address{D.~Gorbachev, Tula State University,
Department of Applied Mathematics and Computer Science,
300012 Tula, Russia}
\email{dvgmail@mail.ru}
\author{Sergey~Tikhonov}
\address{S. Tikhonov, ICREA, Centre de Recerca Matem\`{a}tica, and UAB\\
Campus de Bellaterra, Edifici~C
08193 Bellaterra (Barcelona), Spain.}
\email{stikhonov@crm.cat}
\subjclass[2010]{Primary: 42B10; 
Secondary: 35B60
}
\keywords{Pitt inequalities, weighted Carleman estimates, first order systems of PDE}
\thanks{D.~G. was supported by
the Russian Science Foundation under grant 18-11-00199. S.~T. was partially
supported by MTM 2017-87409-P, 2017 SGR 358, and by the CERCA Programme of the
Generalitat de Catalunya.}
\begin{abstract}
We use Pitt inequalities for the Fourier transform to prove the following
weighted gradient inequality
$$
\|e^{-\tau\ell(\cdot)} u^{\frac 1q} f\|_q\leq c_\tau\| e^{-\tau\ell(\cdot)} v^{\frac 1p}\, \nabla f\|_p, \quad f\in C^\infty_0(\R^n).
$$
This inequality is a Carleman-type estimate that yields unique continuation
results for solutions of first order differential equations and systems.
\end{abstract}
\maketitle

\section{Introduction}


The main purpose of this paper is to prove that the following weighted Sobolev
gradient inequality holds for every linear function $\ell\colon \R^n\to\R$,
every $f\in C^\infty_0(\R^n)$ and every $\tau\ge 0 $, with suitable weights
$u$, $v$ and exponents $1<p,\, q<\infty$.
\begin{equation}\label{e-weighted-grad-tau}
\|e^{-\tau\ell(\cdot)} u^{\frac 1q} f\|_{q}\leq c_\tau\| e^{-\tau\ell(\cdot)} v^{\frac 1p}\, \nabla f\|_{p}
\end{equation}
Here, $c_\tau $ is a finite constant that may depend on $\tau$ but does not depend
on $\ell$ and $f$. We have denoted with $\| f \|_r=\bigl(\int_{\R^n}
|f(x)|^r\,dx\bigr)^{\frac 1r}$ the norm in $L^r(\R^n)$ and
  with $\la x,y\ra= x_1y_1+\cdots + x_ny_n$ and $|x|= \la x,\, x\ra^{\frac12}$ the standard inner product and norm in $\R^n$.

When $\tau>0$, we prove in Theorem \ref{T1-grad-heinig} that
$c_\tau=\max{}(\tau^{-1},1)C$; here and throughout the paper, $C$ denotes a
generic constant that depends only on non-essential parameters, i.e.
$C=C_{u,v,p,q,n}$. In particular, $c_\tau= C $ when $\tau\ge 1$.
%
Inequalities like \eqref{e-weighted-grad-tau} are often called \textit{Carleman
inequalities} in literature. In Sections~\ref{sec-unic-prob} and
\ref{sec-lin-sys} we will discuss Carleman inequalities and their connection
with unique continuation problems and we will prove new unique continuation
results for systems of partial differential equations and inequalities.

When $\tau=0$ in \eqref{e-weighted-grad-tau}, we obtain a standard
 weighted Sobolev gradient inequality (also called \textit{weighted Poincar\'{e}-Sobolev inequality})
\begin{equation}\label{e1-weighted-grad}
\| u ^{\frac 1q} f\|_q\leq c_0\| v ^{\frac 1p}\, \nabla f\|_p, \quad f\in C^\infty_0(\R^n).
\end{equation}
 These inequalities have deep applications in partial differential equations. For example,
 the case $p = 2 < q$ of \eqref{e1-weighted-grad} arises in Harnack's inequality and regularity
 estimates for degenerate second order differential operators in divergence form.
 They also have applications in the study of the stable solutions of
 the Laplace and the $p$-Laplace operators in
the Euclidean space, the Laplace-Kohn operator in the Heisenberg group, the
sub-Laplace operator in the Engel group, etc.;  see e.g. \cite{SW, FV, YL} and
the references cited in these papers; see also \cite{D}.

\medskip
Conditions on the weights $u$ and $v$ and the exponents $p$, $q$ for which \eqref{e1-weighted-grad} holds have been investigated by several authors.
 The most natural approach to study \eqref{e1-weighted-grad} is based on the
 following pointwise inequality (see e.g. \cite{FKS, Perez})
\[
|f(x)|\leq C I_1(|\nabla f|)(x),\quad x\in \R^{n},
\]
where $I_\alpha \phi (x)=\int_{\R^n}\frac{\phi(y)}{|x-y|^{n-\alpha}}\,dy$,
$\alpha<n$, is the \textit{Riesz potential}. This inequality follows from
the classical Sobolev integral representation and is proved e.g. in
\cite{Maz}.

  If the weighted inequality
\begin{equation}\label{uI-vf-ineq}
  \|u^{\frac 1q}I_1 f\|_q \leq C\|v^{\frac 1p} f\|_p
\end{equation}
  holds for the weights $u$ and $v$,
we also have %
\[
\|u^{\frac 1q}f\|_q \leq C \|u^{\frac 1q} I_1(|\nabla f|)\|_q \leq C \|v^{\frac
1p} |\nabla f|\|_p.
\]
  E. Sawyer obtained in \cite{Sawyer} a complete characterization of
the weights $u$ and $v$ for which the gradient
inequality \eqref{uI-vf-ineq} holds with $p\le q$. However, in some cases,
the conditions in \cite{Sawyer} are difficult to verify. When $p=q=2$, a full
characterization of the weights for which \eqref{e1-weighted-grad} holds is
also in \cite{LN}, but also the conditions in this paper are difficult to
verify.

H.~P.~Heinig showed in \cite{Heinig} that weighted norm inequalities for the
Fourier transform (or: \textit{Pitt-type inequalities}) in the form of
 \begin{equation}\label{e1-Genpitt}
\|\h f \,u^{\frac 1q}\|_q\leq C\| f\, w^{\frac 1p}\|_p, \quad f\in
C^\infty_0(\R^n),
\end{equation}
 can be used to prove weighted gradient inequalities. The Fourier transform is defined as $\h f(y)=\int_{\R^n} f(x)e^{-i\la x,\, y\ra}dx$.

 To prove \eqref{e1-weighted-grad} from \eqref{e1-Genpitt}, we observe that
$
\widehat{I_\alpha f}(y)= c_\alpha|y|^{-\alpha}\h f(y),
$
where $c_{\alpha}$ is an explicit constant; we can see at
once that \eqref{uI-vf-ineq} is equivalent to
\[
\|u^{\frac 1q}(|y|^{-1}\h f)\;\lcheck{}
\;\|_q \leq C\|v^{\frac 1p} f\|_p
\]
where $\;\lcheck\;$ denotes the inverse Fourier transform.
We can
apply Pitt's inequality twice (with a suitable weight $w$ and an exponent $\gamma\in (1,\infty)$)
to obtain
\[
\|u^{\frac 1q}(|y|^{-1}\h f)\;\lcheck{}
\;\|_q \leq C \|w^{\frac 1\gamma}
|y|^{-1}\h f\|_\gamma\leq C\|v^{\frac 1p} f\|_p.
\]
Taking $w=|y|^{\gamma}$ and $\gamma=q$ and assuming conditions on the weights
that ensure that both Pitt's inequalities hold  we obtain the main theorem in
\cite{Heinig}, which was proved differently; see Theorem~\ref{T1- Pitt-heinig}
in Section~\ref{sec-grad-heinig}.

\subsection{ Main results}

Throughout this paper, we will {often} write $A\lesssim B$ when $A\le CB$ with a
constant $C>0$.
 We will also write $A\asymp B$ when there exists a constant $C>0$,
called the \textit{constant of equivalence}, such that $C^{-1}A\le B\le CA$.
  As usual, we let~$g^*$ be the non-increasing rearrangement of $g$. We let $p'=\frac{p}{p-1}$ be the dual exponent of $p\in (1, \infty)$.

\medskip
Our main result can be stated as follows.
\begin{Thm}\label{T1-grad-heinig}
Let $u\not\equiv 0$ and $v\not\equiv +\infty$ be weights on
 $\R^{n}$, $n\ge 1$.

\medskip
\textup{(a)} Let $1<p\le q<\infty$. If there exists $\gamma>0$ that satisfies
$\max{}(p,p')\le \gamma\le q$,
 for which
\begin{equation}\label{u*-cond}
\begin{cases}
\displaystyle
A_{u}^{q}(0):=
\sup_{s>0}s^{1-q(\frac{1}{\gamma'}-\frac{1}{n})}u^{*}(s)<\infty,\qquad \mbox{$\frac{1}{n}<\frac{1}{\gamma'}\le
\frac{1}{n}+\frac{1}{q}$},
\\
\displaystyle A_{u}^{q}(\tau):=
 \sup_{s>0}\ \int_{0}^s u^*(t)\,dt
\Bigl(\int_{0}^{\frac{1}{s}}
(t+\tau^{n})^{-\frac {\gamma'}{n} }\,dt\Bigr)^{\frac q{\gamma'}} <\infty,\quad \tau> 0,
\end{cases}
\end{equation}
and
\begin{equation}\label{v*-cond}
A_{v}^{p}:= \sup_{s>0}s^{\frac{p}{\gamma'}-1}(1/v)^{*}(s) <\infty,
\end{equation}
the inequality
\begin{equation}\label{e2-weighted-grad}
\|e^{-\tau \ell(\cdot)}u^{\frac 1 q}f\|_q\le c_\tau\|e^{-\tau \ell(\cdot)}v^{\frac 1p}\,\nabla
f\|_p, \quad f\in C^\infty_0(\R^{n}),
\end{equation}
holds for every $\tau\ge 0$ and every linear function $\ell(x)=\la
\mathbf{a},x\ra+b$, $\mathbf{a}\in \R^{n}$, $|\mathbf{a}|=1$, $b\in \R$, with
the constant
\begin{equation}\label{ctau-bound}
c_\tau=C A_{u}(\tau)A_{v},
\end{equation}
where $C=C_{p,q,\gamma,n}$ is some positive constant. Moreover,
\begin{equation}\label{Au-bound}
A_{u}(\tau)\le \max{}(\tau^{-1},1)A_{u}(1),\quad \tau>0.
\end{equation}

\medskip \textup{(b)} Let $1<q<p<\infty$. If there exists $\gamma>0$ that
satisfies
\[
\begin{cases}
\frac{n}{n-1}<\gamma\le q, & \tau=0,\\ 1<\gamma\le q, & \tau>0,
\end{cases}
\]
for which \eqref{u*-cond} holds and
\[
\tilde{A}_{v}^{r}:= \int_0^\infty s^{-\frac{r}{\gamma}-1}
\Bigl(\int_0^{s}(1/v)^*(t)^{\frac{1}{p-1}}\,dt\Bigr)^{\frac{r}{p'}}\,ds <\infty,
\]
with $\frac 1 r=\frac 1 \gamma-\frac 1 p$, the inequality
\eqref{e2-weighted-grad} holds with the constant
\begin{equation}\label{ctau-bound-tilde}
c_\tau=CA_{u}(\tau)\tilde{A}_{v}.
\end{equation}
\end{Thm}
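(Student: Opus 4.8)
The plan is to strip off the exponential weight by a linear change of unknown, pass to the Fourier transform, and then bound the result by composing two weighted Fourier (Pitt-type) inequalities whose sufficient hypotheses turn out to be exactly \eqref{u*-cond}, \eqref{v*-cond} and $\tilde A_v$; the scaling estimate \eqref{Au-bound} is then a short separate computation. First, since $\ell(x)=\la\mathbf{a},x\ra+b$ has $\nabla\ell=\mathbf{a}$ with $|\mathbf{a}|=1$, the substitution $g:=e^{-\tau\ell}f$ is a bijection of $C^\infty_0(\R^{n})$ onto itself and $e^{-\tau\ell}\nabla f=\nabla g+\tau\mathbf{a}\,g=:L_\tau g$; hence \eqref{e2-weighted-grad} is equivalent to the $\ell$-free inequality
\[
\|u^{\frac1q}g\|_q\le c_\tau\,\|v^{\frac1p}L_\tau g\|_p,\qquad g\in C^\infty_0(\R^{n}),
\]
for the constant-coefficient first-order operator $L_\tau=\nabla+\tau\mathbf{a}$.

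Next I would Fourier-transform: componentwise $\widehat{(L_\tau g)_j}(y)=(iy_j+\tau a_j)\h g(y)$, and as $y_j,a_j,\tau$ are real one has $|iy_j+\tau a_j|^2=y_j^2+\tau^2a_j^2$, so summing over $j$ and using $|\mathbf{a}|=1$ gives the exact identity
\[
|\widehat{L_\tau g}(y)|=(|y|^2+\tau^2)^{\frac12}\,|\h g(y)|.
\]
Viewing $g$ as the inverse Fourier transform of $\h g$, the displayed inequality follows by chaining a Pitt inequality for the inverse Fourier transform,
\[
\|u^{\frac1q}g\|_q\le C\,A_u(\tau)\,\bigl\|(|y|^2+\tau^2)^{\frac12}\h g\bigr\|_\gamma
=C\,A_u(\tau)\,\|\widehat{L_\tau g}\|_\gamma,
\]
with a forward Pitt inequality applied to each component of the field $L_\tau g\in(C^\infty_0(\R^{n}))^{n}$ and recombined (dimensional constants absorbed into $C$),
\[
\|\widehat{L_\tau g}\|_\gamma\le C\,A_v\,\|v^{\frac1p}L_\tau g\|_p
\quad\bigl(\text{case (a)}\bigr),\qquad
\|\widehat{L_\tau g}\|_\gamma\le C\,\tilde A_v\,\|v^{\frac1p}L_\tau g\|_p
\quad\bigl(\text{case (b)}\bigr).
\]

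The first of these is a Pitt inequality with intermediate frequency weight $w=(|y|^2+\tau^2)^{\gamma/2}$ (this is Heinig's choice $w=|y|^\gamma$ when $\tau=0$) and exponents $\gamma\le q$; its Benedetto--Heinig-type sufficient condition matches $u^*$ against the decreasing rearrangement of $w^{-\gamma'/\gamma}=(|y|^2+\tau^2)^{-\gamma'/2}$, and since $|\{|y|<R\}|\asymp R^{n}$ one computes $\bigl((|y|^2+\tau^2)^{-\gamma'/2}\bigr)^*(t)\asymp(t+\tau^{n})^{-\gamma'/n}$, so the condition becomes $\sup_{s>0}\bigl(\int_0^s u^*\bigr)^{1/q}\bigl(\int_0^{1/s}(t+\tau^{n})^{-\gamma'/n}\,dt\bigr)^{1/\gamma'}<\infty$, that is $A_u(\tau)<\infty$; for $\tau=0$ this is equivalent to the Muckenhoupt-type bound $A_u^{q}(0)<\infty$, the constraints on $\gamma$ (and $\gamma'<n$ when $\tau=0$) ensuring convergence of the relevant integrals. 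The second inequality carries trivial frequency weight and physical weight $v$: for $1<p\le q<\infty$, hence $p\le\gamma$, its sufficient condition reduces to the sup bound \eqref{v*-cond}; for $1<q<p<\infty$, hence $\gamma<p$, it is a sub-critical ("$\gamma<p$") Fourier inequality and the sufficient condition is the Hardy-type integral bound $\tilde A_v<\infty$. Composing the two estimates gives \eqref{e2-weighted-grad} with $c_\tau=CA_u(\tau)A_v$ in case (a) and $c_\tau=CA_u(\tau)\tilde A_v$ in case (b). I expect this step -- proving the two general-weight Pitt inequalities with the advertised constants and checking that \eqref{u*-cond}, \eqref{v*-cond}, $\tilde A_v$ are precisely the right sufficient hypotheses, including the sub-critical range $q<p$ and the endpoints of the admissible interval for $\gamma$ -- to be the main obstacle.

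It remains to prove \eqref{Au-bound}. When $\tau\ge1$ one has $(t+\tau^{n})^{-\gamma'/n}\le(t+1)^{-\gamma'/n}$ for all $t>0$, hence $A_u(\tau)\le A_u(1)$. When $0<\tau<1$, the ratio $(t+\tau^{n})^{-\gamma'/n}/(t+1)^{-\gamma'/n}=\bigl((t+1)/(t+\tau^{n})\bigr)^{\gamma'/n}$ is decreasing in $t$, so the ratio of running integrals $\int_0^{R}(t+\tau^{n})^{-\gamma'/n}\,dt\,\big/\int_0^{R}(t+1)^{-\gamma'/n}\,dt$ is nonincreasing in $R$ and therefore bounded by its limit as $R\to0^+$, namely $(\tau^{n})^{-\gamma'/n}=\tau^{-\gamma'}$; raising this to the power $q/\gamma'$ gives $A_u^{q}(\tau)\le\tau^{-q}A_u^{q}(1)$. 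The two cases together give $A_u(\tau)\le\max(\tau^{-1},1)\,A_u(1)$.
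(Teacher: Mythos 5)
Your proposal is correct and follows essentially the same route as the paper: conjugate away the exponential (the paper's Remark on the equivalence with $\|v^{1/p}(\tau\mathbf{a}f+\nabla f)\|_p$), chain two Pitt inequalities through the intermediate norm $\|(|\xi|^2+\tau^2)^{1/2}\h g\|_\gamma$, compute $\bigl((|\xi|^2+\tau^2)^{-\gamma'/2}\bigr)^*(t)\asymp(t+\tau^n)^{-\gamma'/n}$ to identify the hypotheses with \eqref{u*-cond}, \eqref{v*-cond}, $\tilde A_v$, and finish \eqref{Au-bound} by comparing $(t+\tau^n)^{-\gamma'/n}$ with $(t+1)^{-\gamma'/n}$. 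The only cosmetic differences are that you obtain the symbol identity $|\widehat{L_\tau g}|=(|\xi|^2+\tau^2)^{1/2}|\h g|$ directly rather than via the analytic continuation $\h f(\xi-i\tau\mathbf a)$, and that the step you flag as the main obstacle (reducing the Heinig--Jurkat--Sampson--Muckenhoupt conditions to the stated sup/integral bounds) is handled in the paper by quoting those Pitt theorems and an elementary rearrangement lemma.
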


\medskip
\begin{Rem}
When $\tau=0$ and $\gamma= q$ we obtain Theorem~2.4 in \cite{Heinig} 
with simplified conditions on $u$ and $v$. The proof of
Theorem~\ref{T1-grad-heinig} shows that the assumptions $\frac{1}{\gamma'}\le
\frac{1}{n}+\frac{1}{q}$ and $p'\le \gamma$ are to rule out the trivial
weights $u\equiv 0$ and $v\equiv +\infty$.
\end{Rem}

\begin{Rem}\label{rem-ctau-bound}
For the applications of Theorem \ref{T1-grad-heinig} it is
important to have the uniform boundedness of $c_{\tau}$ as $\tau\to \infty$.
From \eqref{ctau-bound}, \eqref{ctau-bound-tilde}  and \eqref{Au-bound},  we have $c_{\tau}\le c_{1}\asymp A_{u}(1)$ whenever
$\tau\ge 1$; thus, to prove the
boundedness of $c_\tau$, it is sufficient to verify that $A_{u}(1)<\infty$.
\end{Rem}

 \begin{Rem}
 It is interesting to compare our weighted gradient inequalities with those proved by G. Sinnamon
in \cite{Sinnamon}. In that paper, a weighted norm inequality in the form of
 \begin{equation}\label{e1-sinnamon}
\|f u^{\frac 1q}\|_q\leq C\|\la \nabla f,\, x\ra w^{\frac 1p}\|_p, \quad f\in C^\infty_0(\R^n)
\end{equation}
is considered. If we denote with $\partial_r f= \la \frac{x}{|x|},\, \nabla f\ra $ the
radial derivative of $f$, the inequality \eqref{e1-sinnamon} is equivalent to
\begin{equation*}\label{e1-radial-sinnamon}
\|f u^{\frac 1q}\|_q\leq C\|\, |x| w ^{\frac 1p} \partial_r f \|_p, \quad f\in
C^\infty_0(\R^n),
\end{equation*}
and implies \eqref{e1-weighted-grad} with $v=|x|^pw $.

In \cite[Theorem 4.1]{Sinnamon}, \eqref{e1-sinnamon} is only proved for $p=q$ and
$q<p$ under some conditions on $u$, $w$; moreover,
in \cite[Theorem~3.4]{Sinnamon} it is proved that when $1\leq p<q<\infty$ and
the weight $w$ is locally integrable on $\R^n$, the inequality
\eqref{e1-sinnamon} holds for every $f\in C^\infty_0(\R^n)$ if and only if
$u\equiv 0$ a.e.

When $f$ is radial, $\nabla f(x) =\frac{x}{|x|}\,\partial_r f(x)$, and so
$|\nabla f(x)|=|\partial_r f(x)|$. Thus, our Theorem \ref{T1-grad-heinig}
yields \eqref{e1-sinnamon} for radial functions with a nontrivial weight $u$
and with $w=|x|^{-p}e^{-p\tau\ell(x)}v$. We proved in Corollary
\ref{Cor-power-weight} below that we can consider piecewise power weights
$v=|x|^{(\beta_1,\, \beta_2)}$, with $0\le \beta_{1}\le
n\bigl(\frac{p}{\gamma'}-1\bigr)$ (see definition \eqref{def-power-f}). For
example, if $\beta_{1}=n\bigl(\frac{p}{\gamma'}-1\bigr)$, then $w$ is locally
integrable for $\frac{1}{n}<\frac{1}{\gamma'}$ because $-p+\beta_{1}>-n$. We
remark that the counterexample in \cite[Theorem~3.4]{Sinnamon} is not radial.
\end{Rem}

 \begin{Rem}\label{rem-u-v-a-ineq}
The inequality \eqref{e2-weighted-grad} is equivalent to
\begin{equation}\label{u-v-a-ineq}
\|u^{\frac 1q}f\|_q\le c_\tau \|v^{\frac 1 p}\,(\tau \mathbf{a}f+\nabla f)\|_p.
\end{equation}
\end{Rem}

To see this, it is enough to use the substitution $f_{1}=e^{-\tau \ell(\cdot)}f$ and $\nabla (e^{\tau \ell(\cdot)}f_{1})=e^{\tau
\ell(\cdot)}(\tau \mathbf{a}f_{1}+\nabla f_{1})$.

\medskip
Let $\beta_1$, $\beta_2\in\R$; we define the piecewise power function $t\mapsto t^{(\beta_{1},\beta_{2})}$ as follows:
\begin{equation}\label{def-power-f}
t^{(\beta_{1},\beta_{2})}:=\begin{cases} t^{\beta_{1}},& 0<t\le 1,\\
t^{\beta_{2}},& t\ge 1. \end{cases}
\end{equation}
In the following corollary of Theorem \ref{T1-grad-heinig} we consider the
important case of piecewise power weights.

\begin{Cor}\label{Cor-power-weight}
Let $1<p\leq q<\infty$; let $\gamma>0$ that satisfies $\max{}(p,p')\le
\gamma\le q$ and $\frac 1n<\frac{1}{\gamma'}\leq \frac
1n+\frac 1q$. 

With the notation and the assumptions of Theorem
\textup{\ref{T1-grad-heinig}~(a)}, the inequality \eqref{e2-weighted-grad}
holds with $u(x)=|x|^{(-\alpha_{1},-\alpha_{2})}$,
$v(x)=|x|^{(\beta_{1},\beta_{2})}$, with $\alpha_j,\,\beta_j\ge 0$, provided
that
\begin{equation}\label{cond-alpha}
\alpha_{1}\le n\Bigl(1-\frac{q}{\gamma'}+\frac{q}{n}\Bigr),\quad
\begin{cases}
\alpha_{2}\ge n\bigl(1-\frac{q}{\gamma'}+\frac{q}{n}\bigr) & \text{when } \ \tau=0,\\
\alpha_{2}\ge 0 & \text{when }\ \tau>0,
\end{cases}
\end{equation}
\begin{equation}\label{cond-beta}
\beta_{1}\le n\Bigl(\frac{p}{\gamma'}-1\Bigr),\quad \beta_{2}\ge
n\Bigl(\frac{p}{\gamma'}-1\Bigr).
\end{equation}
In particular, for power weights $u(x)=|x|^{-\alpha}$, $v(x)=|x|^{\beta}$ the inequality \eqref{e2-weighted-grad} holds if
\[
\begin{cases}
\alpha=n\bigl(1-\frac{q}{\gamma'}+\frac{q}{n}\bigr)\ge 0 & \text{when }\ \tau=0,\\[1.5ex]
0\le \alpha\le n\bigl(1-\frac{q}{\gamma'}+\frac{q}{n}\bigr) &\text{when }\ \tau>0,
\end{cases}\qquad
\beta=n\Bigl(\frac{p}{\gamma'}-1\Bigr)\ge 0.
\]
Moreover, the conditions
\begin{equation}\label{ab-n-cond}
\begin{cases}
\frac{\alpha}{q}+\frac{\beta}{p}=n\bigl(\frac{1}{q}-\frac{1}{p}\bigr)+1 &
\text{when }\ \tau=0,\\[1.5ex]
\frac{\alpha}{q}+\frac{\beta}{p}\le n\bigl(\frac{1}{q}-\frac{1}{p}\bigr)+1 &
\text{when }\ \tau>0,
\end{cases}
\end{equation}
are necessary for the validity of \eqref{e2-weighted-grad}.
\end{Cor}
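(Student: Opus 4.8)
The plan is to derive the corollary from Theorem~\ref{T1-grad-heinig}~(a) by checking that the piecewise power weights satisfy \eqref{u*-cond} and \eqref{v*-cond} precisely under \eqref{cond-alpha} and \eqref{cond-beta}, and then to obtain the necessity of \eqref{ab-n-cond} by a dilation argument. The first step is to record the non-increasing rearrangements. A routine computation with the level sets of a radial function (the genuine break point sits at $s$ equal to the measure of the unit ball, an $n$-dependent constant, hence harmless for $\asymp$) gives
\[
u^*(s)\asymp s^{(-\alpha_1/n,\,-\alpha_2/n)},\qquad (1/v)^*(s)\asymp s^{(-\beta_1/n,\,-\beta_2/n)}.
\]
Inserting the second into \eqref{v*-cond} turns $A_v^p<\infty$ into $\sup_{0<s\le1}s^{p/\gamma'-1-\beta_1/n}<\infty$ together with $\sup_{s\ge1}s^{p/\gamma'-1-\beta_2/n}<\infty$; the first holds iff $\beta_1\le n(p/\gamma'-1)$, the second iff $\beta_2\ge n(p/\gamma'-1)$, i.e.\ exactly \eqref{cond-beta}. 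Also $0\le\alpha_1<n$ (the bound $\alpha_1<n$ because $1/\gamma'>1/n$) and $\beta_1\ge0$, so $u^*$ and all the weights are locally integrable where this is needed below.

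For \eqref{u*-cond} there are two regimes. When $\tau=0$, substituting $u^*(s)\asymp s^{(-\alpha_1/n,-\alpha_2/n)}$ into $A_u^q(0)$ and arguing as for $A_v$ reduces $A_u^q(0)<\infty$ to $\alpha_1\le n(1-q/\gamma'+q/n)$ (from $s\to0$) and $\alpha_2\ge n(1-q/\gamma'+q/n)$ (from $s\to\infty$), which is \eqref{cond-alpha} with $\tau=0$. When $\tau>0$, the inner integral defining $A_u^q(\tau)$ is elementary: since $\gamma'/n<1$,
\[
\int_0^{1/s}(t+\tau^n)^{-\gamma'/n}\,dt\asymp
\begin{cases}
s^{\gamma'/n-1}, & s\lesssim\tau^{-n},\\
\tau^{-\gamma'}s^{-1}, & s\gtrsim\tau^{-n}.
\end{cases}
\]
Raising this to the power $q/\gamma'$ and multiplying by $\int_0^s u^*(t)\,dt$ — which is $\asymp s^{1-\alpha_1/n}$ for $s\le1$ and grows at most polynomially (a constant, $1+\log s$, or $s^{1-\alpha_2/n}$, according as $\alpha_2>n$, $=n$, or $<n$) for $s\ge1$ — one checks that the product is bounded on every compact $s$-interval, is bounded as $s\to0$ iff $\alpha_1\le n(1-q/\gamma'+q/n)$, and is bounded as $s\to\infty$ for \emph{every} $\alpha_2\ge0$. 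The last assertion uses $q/\gamma'\ge1$, which holds because $\gamma\ge\max(p,p')\ge p'\ge q'$ — this is the point at which the hypothesis $p\le q$ enters. Thus \eqref{u*-cond} holds under \eqref{cond-alpha} with $\tau>0$, and Theorem~\ref{T1-grad-heinig}~(a) yields \eqref{e2-weighted-grad}.

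The ``in particular'' statements are the case $\alpha_1=\alpha_2=\alpha$, $\beta_1=\beta_2=\beta$, for which the two-sided conditions \eqref{cond-alpha} and \eqref{cond-beta} collapse to $\beta=n(p/\gamma'-1)$, to $\alpha=n(1-q/\gamma'+q/n)$ when $\tau=0$, and to $0\le\alpha\le n(1-q/\gamma'+q/n)$ when $\tau>0$; both displayed quantities are $\ge0$ because $1/\gamma'\le1/n+1/q$ and $\gamma\ge p'$. Finally, the necessity of \eqref{ab-n-cond} follows by dilation. Fix a nonzero $f\in C^\infty_0(\R^n)$ supported in an annulus, so that every weighted norm of $f$ and of $\nabla f$ is finite and positive, and test \eqref{e2-weighted-grad} — equivalently, by Remark~\ref{rem-u-v-a-ineq}, \eqref{u-v-a-ineq} — on $f_\lambda(x)=f(\lambda x)$. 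After a change of variables the left side of \eqref{e2-weighted-grad} scales like $\lambda^{(\alpha-n)/q}$. When $\tau=0$ the right side scales like $\lambda^{1-\beta/p-n/p}$, so for the inequality to hold for all $\lambda>0$ with a fixed constant the two powers must agree, which rearranges to $\alpha/q+\beta/p=n(1/q-1/p)+1$. When $\tau>0$ the two terms $\tau\mathbf{a}f_\lambda$ and $\nabla f_\lambda$ in \eqref{u-v-a-ineq} scale differently; bounding the (rescaled) right side above by the triangle inequality shows it is $\lesssim\lambda^{1-\beta/p-n/p}$ for large $\lambda$, whence $(\alpha-n)/q\le1-\beta/p-n/p$, i.e.\ $\alpha/q+\beta/p\le n(1/q-1/p)+1$.

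The main obstacle is the $\tau>0$ verification of \eqref{u*-cond}: the supremum must be split according to the relative sizes of $s$, $1$, and $\tau^{-n}$, the resulting power laws tracked, and $q/\gamma'\ge1$ (equivalently $p\le q$, via $p'\ge q'$) invoked to absorb the behaviour at infinity for every $\alpha_2\ge0$. By comparison the rearrangement estimates and the dilation argument are routine.
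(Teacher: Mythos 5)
Your proposal is correct and follows essentially the same route as the paper: compute the rearrangements $u^*(s)\asymp s^{(-\alpha_1/n,-\alpha_2/n)}$, $(1/v)^*(s)\asymp s^{(-\beta_1/n,-\beta_2/n)}$, reduce \eqref{u*-cond} and \eqref{v*-cond} to comparisons of piecewise powers near $0$ and $\infty$, and prove the necessity of \eqref{ab-n-cond} by testing \eqref{u-v-a-ineq} on dilates. The only cosmetic difference is that for $\tau>0$ you evaluate the inner integral $\int_0^{1/s}(t+\tau^n)^{-\gamma'/n}\,dt$ directly, whereas the paper routes this through its Corollary~\ref{cor-Au1}(i), which packages the same computation.
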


Letting $\tau=\alpha=\beta=0$, $1<p<n$, and $\gamma=q$ in (\ref{ab-n-cond}), we
obtain $q=\frac{np}{n-p}$
and Corollary \ref{Cor-power-weight} yields the classical Sobolev inequality
$\|f\|_q\le C \|\nabla f\|_p$; see also \cite[Corollary~2.5]{Heinig}.

When $\tau=0$, we obtain the inequality
\[
\Bigl(\int_{\R^n}|f|^q |x|^{(-\alpha_{1},-\alpha_{2})}\,dx\Bigr)^{\frac 1 q}\le C
\Bigl(\int_{\R^n}|\nabla f|^p |x|^{(\beta_{1},\beta_{2})}\,dx\Bigr)^{\frac 1p},
\]
which was proved by Maz'ya \cite{Maz} and Caffarelli,
Kohn, and Nirenberg \cite{caff} for power weights.
In \cite[Sect.~2.1.6]{Maz} it was proved that if $1<p<n$, $p\le q\le
\frac{pn}{n-p}$, and
$-\frac{\alpha}{q}=\frac{\beta}{p}-1+n\left(\frac{1}{p}-\frac{1}{q}\right)>-\frac{n}{q}$,
then
\begin{equation}\label{e-powerw}
\Bigl(\int_{\R^n}|f|^q |x|^{-\alpha}\,dx\Bigr)^{\frac 1 q}\le C
\Bigl(\int_{\R^n}|\nabla f|^p |x|^{\beta}\,dx\Bigr)^{\frac 1p}.
\end{equation}
In \cite[Lemma~2.1]{Ho}, this inequality was proved 
for $n\ge
2$, $1<p<+\infty$, $0\le
\frac{1}{p}-\frac{1}{q}=n\left(1-\frac{\beta}{p}-\frac{\alpha}{q}\right)$ and
$-\frac{n}{q}<-\frac{\alpha}{q}\le \frac{\beta}{p}$.
Note that the conditions in \cite{Maz} and \cite {Ho} are the same, except for the extra condition $p<n$ in
\cite{Maz}.

From Corollary~\ref{Cor-power-weight} with $\tau=0$ we have that
$\alpha=n\bigl(1-\frac{q}{\gamma'}+\frac{q}{n}\bigr)\ge 0$,
$\beta=n\bigl(\frac{p}{\gamma'}-1\bigr)\ge 0$, where $\max{}(p,p')\le \gamma\le
q$ and $\frac 1n<\frac{1}{\gamma'}\leq \frac 1n+\frac 1q$. These inequalities
imply
$\frac{1}{p}-\frac{1}{q}=n\bigl(1-\frac{\beta}{p}-\frac{\alpha}{q}\bigr)$,
$-\frac{n}{q}<-\frac{\alpha}{q}\le \frac{\beta}{p}$, but we also have to assume
$\alpha\ge 0$, $\beta\ge 0$ because of our method of the proof.

It is interesting to observe that the best constant in the inequality
\eqref{e-powerw} has been evaluated in \cite{YL} and also in \cite{FV} for
special values of $\alpha$ and $\beta$.

\subsection{Unique continuation}
Our Theorem \ref{T1-grad-heinig} can be used to prove unique continuation
results for  weak  solutions (also called {\it solutions in distribution sense}) of systems
of differential equations and inequalities; see Section~\ref{sec-unic-prob} for
definitions and preliminary results.

We consider solutions in weighted Sobolev spaces of distributions: given a
domain $D\subset \R^n$, we let $W^{m, p, v }_0(D)$ be the closure of
$C^\infty_0(D)$ with respect to the norm
\[
\|f\|_{W^{m,p,v}(D)}= \sum_{|\alpha|=0}^m \|v^{\frac 1p} \partial^\alpha_x
f\|_{L^p(D)}
\]
where $\alpha=(\alpha_1, \, \dots,\, \alpha_n)\in \N^n$ and the
$\partial^\alpha_x f = \partial^{\alpha_1}_{x_1}\cdots
\partial^{\alpha_n}_{x_n}f$ are the partial derivatives of $f$. In Section~\ref{sec-unic-prob}
we prove the following

\begin{Thm}\label{T1-UC-grad}
Let $p$, $q$, $\gamma$, $u$ and $v$ be as in
Theorem~\textup{\ref{T1-grad-heinig}~(a)}. Let $\frac 1r=\frac 1p-\frac 1q$.
Let $f\in W^{1,p,v}_0(\R^n)$ be a solution of the differential inequality
\begin{equation}\label{ineq-V-nable}
|\nabla f|\leq V |f|
\end{equation}
with $V\in L^r (\supp f, \, v^{\frac rp} u^{-\frac rq}\,dx)$.
If, for some linear function $\ell\colon \R^n\to\R$, we have that $\supp f
\subset \{x\colon \ell(x)\geq 0\} $, necessarily $f\equiv 0$.
\end{Thm}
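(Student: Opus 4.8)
The plan is to exploit the Carleman estimate of Theorem~\ref{T1-grad-heinig}~(a) together with the hypothesis that $\supp f$ lies in a half-space $\{\ell(x)\ge 0\}$, and then let the parameter $\tau\to+\infty$. First I would fix the linear function $\ell(x)=\la\mathbf a,x\ra+b$ with $|\mathbf a|=1$ for which $\supp f\subset\{\ell\ge 0\}$, and combine the differential inequality \eqref{ineq-V-nable} with \eqref{e2-weighted-grad}: for every $\tau\ge 0$,
\begin{align*}
\|e^{-\tau\ell(\cdot)}u^{1/q}f\|_q
&\le c_\tau\|e^{-\tau\ell(\cdot)}v^{1/p}\,\nabla f\|_p\\
&\le c_\tau\|e^{-\tau\ell(\cdot)}v^{1/p}\,V|f|\|_p.
\end{align*}
On $\supp f$ we write the integrand as $V\cdot v^{1/p}u^{-1/q}\cdot u^{1/q}|f|$ and apply H\"older's inequality with exponents $r$, $q$ (so that $\frac1p=\frac1r+\frac1q$), obtaining
\[
\|e^{-\tau\ell(\cdot)}v^{1/p}V|f|\|_p
\le \|V\|_{L^r(\supp f,\,v^{r/p}u^{-r/q})}\,\|e^{-\tau\ell(\cdot)}u^{1/q}f\|_q.
\]
Hence $\|e^{-\tau\ell(\cdot)}u^{1/q}f\|_q\le c_\tau\,\|V\|_{L^r}\,\|e^{-\tau\ell(\cdot)}u^{1/q}f\|_q$.

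The next step is to remove the self-referential factor. By Remark~\ref{rem-ctau-bound} (equivalently \eqref{ctau-bound} together with \eqref{Au-bound}) we have $c_\tau\le c_1\asymp A_u(1)$ for all $\tau\ge 1$, so $c_\tau$ is bounded by a constant independent of $\tau$. Thus there is a $\tau_0<\infty$, depending only on this constant and on $\|V\|_{L^r}$ but \emph{not} on $\tau$, such that $c_\tau\|V\|_{L^r}\le\frac12$ for all $\tau\ge\tau_0$ --- \emph{provided} the quantity $\|e^{-\tau\ell(\cdot)}u^{1/q}f\|_q$ is finite; then the inequality forces $\|e^{-\tau\ell(\cdot)}u^{1/q}f\|_q=0$, i.e. $f=0$ a.e.\ on the set where $u>0$. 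Since $u\not\equiv 0$ is only assumed to be a weight, a small extra argument (localizing $\ell$, or noting that the same reasoning applies after a translation/choice of $\mathbf a$, or invoking that $u>0$ on a set of positive measure and $f$ solves the inequality on all of $\R^n$) upgrades this to $f\equiv 0$.

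The main obstacle --- and the point that needs care --- is the finiteness of $\|e^{-\tau\ell(\cdot)}u^{1/q}f\|_q$ for $f\in W^{1,p,v}_0(\R^n)$ rather than $f\in C^\infty_0$. Because $\ell\ge 0$ on $\supp f$, the exponential weight $e^{-\tau\ell(x)}\le 1$ there, so $\|e^{-\tau\ell(\cdot)}u^{1/q}f\|_q\le\|u^{1/q}f\|_q$, and this last quantity is finite by \eqref{e2-weighted-grad} with $\tau=0$ (Theorem~\ref{T1-grad-heinig}~(a) applied to an approximating sequence $f_k\in C^\infty_0$ with $f_k\to f$ in $W^{1,p,v}_0$, using that $\nabla f\in L^p(v)$). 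This same approximation argument is what justifies using the Carleman estimate \eqref{e2-weighted-grad} for the distributional solution $f$ in the first place: one applies it to $f_k$, passes to the limit on the right-hand side (which converges in $L^p(e^{-p\tau\ell}v)\subset L^p(v)$ since $e^{-p\tau\ell}\le 1$ on a neighborhood of the supports), and uses Fatou on the left. I would carry out these approximation and localization details after the clean formal computation above, since the heart of the proof is the $\tau\to\infty$ absorption trick, which works precisely because $c_\tau$ stays bounded while $e^{-\tau\ell}$ stays $\le 1$ on $\supp f$.
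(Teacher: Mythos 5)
There is a genuine gap at the absorption step, and it is the heart of the argument. After arriving at
\[
\|e^{-\tau\ell(\cdot)}u^{1/q}f\|_q\le c_\tau\,\|V\|_{L^r(\supp f,\,v^{r/p}u^{-r/q}dx)}\,\|e^{-\tau\ell(\cdot)}u^{1/q}f\|_q,
\]
you claim that one can choose $\tau_0$ so that $c_\tau\|V\|_{L^r}\le\frac12$ for $\tau\ge\tau_0$. But \eqref{ctau-bound} and \eqref{Au-bound} only give $c_\tau\le c_1$ for $\tau\ge 1$: the constant is \emph{bounded} in $\tau$, it does not tend to zero. Since $\|V\|_{L^r(\supp f)}$ can be arbitrarily large, no choice of $\tau$ makes the product less than $1$. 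What your computation actually proves is that $f\equiv 0$ under the extra hypothesis $c_1\|V\|_{L^r}<1$ --- that is essentially Theorem~\ref{T-UC-dir1}, a smallness statement, not the unique continuation Theorem~\ref{T1-UC-grad}. A further warning sign is that your absorption never uses the half-space condition $\supp f\subset\{\ell\ge 0\}$ except to check finiteness; if the argument worked it would kill every solution regardless of support, which is too strong.

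The missing idea is localization to a thin strip adjacent to the hyperplane. Taking $\ell(x)=x_n$ and $f\equiv 0$ on $\{x_n<0\}$, one splits the right-hand side of the Carleman estimate over $S_\epsilon=\{0<x_n<\epsilon\}$ and $\{x_n>\epsilon\}$. On $S_\epsilon$ one applies the differential inequality and H\"older exactly as you do, but now the relevant quantity is $\|Vv^{1/p}u^{-1/q}\|_{L^r(S_\epsilon\cap\supp f)}$, which can be made smaller than $\frac1{2c_1}$ by choosing $\epsilon$ small, using absolute continuity of the integral (this is where the hypothesis $V\in L^r(\supp f,\dots)$ is really used). This term is then absorbed into the left-hand side restricted to $S_\epsilon$. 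The contribution from $\{x_n>\epsilon\}$ is bounded by $c_1e^{-\tau\epsilon}\|\nabla f\,v^{1/p}\|_{L^p(\{x_n>\epsilon\})}=C'e^{-\tau\epsilon}$ with $C'$ independent of $\tau$. One obtains $\frac12\|e^{\tau(\epsilon-x_n)}fu^{1/q}\|_{L^q(S_\epsilon)}\le C'$, and since $\epsilon-x_n>0$ on $S_\epsilon$ the left-hand side blows up as $\tau\to\infty$ unless $f\equiv 0$ on $S_\epsilon$; iterating gives $f\equiv 0$. Your remarks on approximating $f\in W^{1,p,v}_0$ by $C^\infty_0$ functions and on finiteness of the weighted $L^q$ norm are sensible and in the spirit of Lemma~\ref{L-4.1}, but they do not repair the absorption step.
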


Note that the condition $V\in L^r (\supp f, \, v^{\frac rp}
u^{-\frac rq}\,dx)$ follows from either $V\in L^r (\R^{n}, \, v^{\frac rp}
u^{-\frac rq}\,dx)$ if $\supp f$ is unbounded, or from $V\in L_\mathrm{loc}^r
(\R^{n}, \, v^{\frac rp} u^{-\frac rq}\,dx)$ if $  f$ has compact support.
In particular, for power weights $u$, $v$ as in
Corollary~\ref{Cor-power-weight}, the differential inequality
\eqref{ineq-V-nable} does not have solutions with compact   support if
$V\asymp |x|^{-1+\epsilon}$ for some $\epsilon>0$; see
Remark~\ref{rem-Vepsilon}.

\medskip

To prove Theorem \ref{T1-UC-grad} we use a method developed by T. Carleman in
\cite{C}. A brief discussion on unique continuation problems and Carleman's
method is in Sections~\ref{sec-unic-prob} and \ref{sec-lin-sys}.

\medskip When $D$ is measurable and $v$ is a suitable weight we consider the
Dirichlet problem
\begin{equation}\label{e-dir1}
\begin{cases}
-\div{}( v\, \nabla f |\nabla f|^{p-2}) =v\,Vf |f|^{p-2}, \\ f\in W_0^{1,p,v}(D),
\end{cases} \end{equation}
where $\div{}((g_1,\, \dots, \, g_n))= \partial_{x_1} g_1+\dots+\partial_{x_n}
g_n$ and the potential $V$ is in a suitable $L^r$ space. The operator $
\div{}( v\, \nabla f |\nabla f|^{p-2})$ is known as {\it weighted
$p$-Laplacian} in the literature (see e.g. \cite{FPR, K}) and is denoted
by $\Delta_{p}$ when $v\equiv 1$. The weighted $p$-Laplacian is nonlinear
when $p\ne 2$ and is linear when $p=2$.

When $v\equiv 1$, \eqref{e-dir1} can be compared to the Sturm-Liouville problem
in the form of $-\Delta_{p}f=(\lambda m-V)f|f|^{p-2}$ (see e.g. \cite{CQ}).
When $n=1$ and $p=2$ we have $-(vf')'=vVf$. This problem is related to the
classical Sturm-Liouville problem $-(vf')'=(\lambda w-q)f$.   See \cite{LS}.

\medskip
We prove the following

\begin{Thm}\label{T-UC-dir1}
Let $f \in W_0^{1,p, v}(D)$ be a solution of the Dirichlet problem
\eqref{e-dir1}. Let $V_+=\max \{V, 0\}$. Assume that $|V|^{\frac
1p}\in L^r (D, \, v^{\frac rp} u^{-\frac rq}\,dx)$, where $u$, $v$ are as in
Theorem~\textup{\ref{T1-grad-heinig}} and $\frac 1r=\frac 1p-\frac 1q$. Then,
either
\begin{equation*}\label{mainab}
c_0\|u^{-\frac 1q}v^{\frac 1p} \, V_+^{\frac 1p}\|_{L^r(D)} \ge 1,
\end{equation*}
where $c_0 $ is as in \eqref{e1-weighted-grad}, or $f\equiv 0$ in $D$.
\end{Thm}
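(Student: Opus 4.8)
The plan is to combine the weak formulation of the Dirichlet problem \eqref{e-dir1} with one H\"older estimate and the weighted gradient inequality \eqref{e1-weighted-grad}. The starting point is to test \eqref{e-dir1} against $f$ itself, which is legitimate since $f\in W_0^{1,p,v}(D)$ is precisely the space in which the problem is posed. Testing the weak formulation $\int_D v\,\nabla f\,|\nabla f|^{p-2}\cdot\nabla\varphi\,dx=\int_D v\,V f\,|f|^{p-2}\varphi\,dx$ with $\varphi=f$ gives the energy identity
\[
\int_D v\,|\nabla f|^{p}\,dx=\int_D v\,V\,|f|^{p}\,dx\le\int_D v\,V_+\,|f|^{p}\,dx ,
\]
the last step being $V\le V_+$. (All three integrals are finite: $\int_D v|\nabla f|^{p}<\infty$ since $f\in W^{1,p,v}(D)$, while finiteness of the right-hand side follows from the H\"older bound below and the hypothesis $|V|^{1/p}\in L^{r}(D,\,v^{r/p}u^{-r/q}\,dx)$.)

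Next I would bound the right-hand side. Factoring
\[
v\,V_+\,|f|^{p}=\bigl(u^{-\frac1q}v^{\frac1p}V_+^{\frac1p}\bigr)^{p}\bigl(u^{\frac1q}|f|\bigr)^{p}
\]
and applying H\"older's inequality with the conjugate exponents $\frac qp$ and $\frac q{q-p}$ --- observing that $p\cdot\frac q{q-p}=\frac{pq}{q-p}=r$ because $\frac1r=\frac1p-\frac1q$ --- we obtain
\[
\int_D v\,V_+\,|f|^{p}\,dx\le\bigl\|u^{-\frac1q}v^{\frac1p}V_+^{\frac1p}\bigr\|_{L^{r}(D)}^{p}\;\bigl\|u^{\frac1q}f\bigr\|_{L^{q}(D)}^{p},
\]
where the first factor is finite since $V_+\le|V|$. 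Now I would apply the weighted gradient inequality \eqref{e1-weighted-grad} --- the $\tau=0$ case of Theorem~\ref{T1-grad-heinig} for the weights $u$, $v$, extended from $C_0^{\infty}(\R^{n})$ to $f\in W_0^{1,p,v}(D)$ by density --- to get $\|u^{1/q}f\|_{L^{q}(D)}\le c_0\|v^{1/p}\nabla f\|_{L^{p}(D)}$; in particular this quantity is finite, which completes the finiteness check above. Combining the energy identity with the last two displays yields
\[
\|v^{\frac1p}\nabla f\|_{L^{p}(D)}^{p}\le c_0^{p}\,\bigl\|u^{-\frac1q}v^{\frac1p}V_+^{\frac1p}\bigr\|_{L^{r}(D)}^{p}\;\|v^{\frac1p}\nabla f\|_{L^{p}(D)}^{p}.
\]
If $\|v^{1/p}\nabla f\|_{L^{p}(D)}\neq0$, cancelling this factor and taking $p$-th roots gives $c_0\|u^{-1/q}v^{1/p}V_+^{1/p}\|_{L^{r}(D)}\ge1$, the first alternative; if instead $\|v^{1/p}\nabla f\|_{L^{p}(D)}=0$, then \eqref{e1-weighted-grad} forces $\|u^{1/q}f\|_{L^{q}(D)}=0$ and hence $f\equiv0$ in $D$.

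The algebra here is routine; the points that need care are the admissibility of $f$ as a test function in \eqref{e-dir1} --- equivalently, the finiteness of $\int_D v\,V_+\,|f|^{p}\,dx$, which is exactly what the $L^{r}$-hypothesis on $V$ and the H\"older step together guarantee --- and the density extension of \eqref{e1-weighted-grad} from $C_0^{\infty}$ to the weighted Sobolev space $W_0^{1,p,v}(D)$. I expect these, rather than the computation, to be the only real obstacles.
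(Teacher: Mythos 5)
Your proposal is correct and follows essentially the same route as the paper: the paper isolates your two ``points that need care'' as Lemma~\ref{L-4.1} (the density extension of \eqref{e1-weighted-grad} to $W_0^{1,p,v}(D)$) and Lemma~\ref{L-GreenG} (the energy identity $\int_D|\nabla f|^pv\,dx=\int_DV|f|^pv\,dx$, obtained by approximating $\overline f$ by test functions and using the $L^r$ hypothesis on $V$ together with H\"older to pass to the limit), and then performs the same H\"older factorization and cancellation. The only step you leave implicit is the actual limiting argument legitimizing $\varphi=f$ as a test function --- finiteness of $\int_D vV_+|f|^p\,dx$ is not by itself the same as convergence of $\int_D\psi_n Vf|f|^{p-2}v\,dx$ along the approximating sequence --- but you have correctly identified all the ingredients that make it work.
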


Thus, the Dirichlet problem \eqref{e-dir1} has the unique solution $f\equiv 0$
if the weighted $L^r$ norm of $V_+^{\frac 1p}$ on $ D $ is small enough.

\medskip To the best of our knowledge, the method of proof of Theorem
\ref{T-UC-dir1} has been used for the first time in \cite{DH}; it is extensively used
in \cite{DEHL} and \cite{EHL}.

\section{Proof of Theorem \ref{T1-grad-heinig}}\label{sec-grad-heinig}

 In this section we prove our main theorem and a few corollaries.

\subsection{Preliminary results}

 We will use the following theorem due to
 Heinig \cite{H}, Jurkat-Sampson \cite{JS}, and Muckenhoupt \cite{M1}.

\begin{Thm}\label{T1- Pitt-heinig} Let $n\ge 1$.
If $1<p\le q<\infty$ and the weights $u$ and $w$ satisfy
\begin{equation*}\label{1e-weight-cond}
\sup_{s>0}\Bigl(\int_{0}^{\frac 1s} u^*(t )\,dt\Bigr)^{\frac{1}{q}}
\Bigl(\int_{0}^{\frac{1}{s}} ((1/w)^*(t))^{\frac{1}{p-1}}\,dt\Bigr)^{\frac{1}{p'}}=:A_{1}<\infty,
\end{equation*}
or if $1<q<p<\infty$, and
\begin{equation}\label{2e-weight-cond}
  \sup_{s>0} \biggl(\int_0^\infty \Bigl(\int_0^ {\frac 1s} u^*(t) dt\Bigr)^{ \frac rq}
  \Bigl( \int_0^{s} ((1/w)^*(t))^{\frac{1}{p-1}}dt\Bigr)^{\frac{r}{q'}}
((1/w)^*(s))^{\frac{1}{p-1}}\,ds\biggr)^{\frac1r} $$$$ =:A_{2} < \infty
\end{equation}
where $ r=\frac{qp}{q-p}$,
then Pitt's inequality
\[
\|\h f \,u^{\frac 1q}\|_q\leq C_{j}\| f\, w^{\frac 1p}\|_p, \quad f\in
C^\infty_0(\R^n),\quad j=1,2,
\]
holds with $C_{j}\le C_{p,q,j}A_{j}$.
\end{Thm}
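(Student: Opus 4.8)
\textbf{Plan of proof for Theorem \ref{T1- Pitt-heinig}.}

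The plan is to reduce the $n$-dimensional Pitt inequality to a one-dimensional weighted Hardy inequality for monotone functions, by passing to non-increasing rearrangements. First I would recall the key rearrangement estimate for the Fourier transform: for $f\in C^\infty_0(\R^n)$ one has, pointwise in the rearrangement variable $s>0$,
\begin{equation*}
(\h f)^*(s)\ll \int_0^{1/s} f^*(t)\,dt + \frac1s\int_{1/s}^\infty f^*(t)\,t^{-1}\,dt,
\end{equation*}
which is the Jodeit--Torchinsky/Hardy--Littlewood--P\'olya-type bound obtained by splitting the integral $\h f(y)=\int f(x)e^{-i\la x,y\ra}\,dx$ over $|x|\le 1/|y|$ and $|x|\ge 1/|y|$ and using $|e^{-i\la x,y\ra}-1|\ll |x||y|$ together with $\la x,y\ra$-oscillation via an integration by parts (or, equivalently, the endpoint behavior of the Fourier transform on $L^1$ and $L^\infty$ and the rearrangement inequality $\|\h f\|_\infty\le\|f\|_1$). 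The second term is dominated by $\sup_{t>0}\bigl(\frac1s\int_{1/s}^\infty\bigr)$-type averages and is in fact controlled by the first once one replaces $f^*$ by its level-set average; the upshot is the clean two-term majorant above, or after a further elementary manipulation,
\begin{equation*}
(\h f)^*(s)\ll \int_0^{1/s} f^*(t)\,dt,\qquad s>0,
\end{equation*}
valid after symmetrizing in $s\leftrightarrow 1/s$ — I would cite \cite{JS} (Jurkat--Sampson) for the precise form, since that is one of the listed sources.

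Granting that majorant, the left-hand side becomes
\begin{equation*}
\|\h f\, u^{1/q}\|_q^q=\int_0^\infty \bigl((\h f)^*(s)\bigr)^q u^*(s)\,ds\ll \int_0^\infty\Bigl(\int_0^{1/s} f^*(t)\,dt\Bigr)^q u^*(s)\,ds,
\end{equation*}
using that the non-increasing rearrangement of a product is dominated by the product of rearrangements in the direction we need, $\int |\h f|^q u\le \int (\h f)^{*q} u^*$. A change of variables $s\mapsto 1/s$ turns this into a classical Hardy operator $g\mapsto \int_0^s g$ acting on $g=f^*$, against the weight $u^*(1/s)s^{-2}$. The right-hand side is $\|f\,w^{1/p}\|_p$, and by the reverse rearrangement inequality (Hardy--Littlewood) and the fact that $f^*$ is the extremal rearrangement, it suffices to bound the Hardy operator on the cone of non-increasing functions $g=f^*$: that is, to prove
\begin{equation*}
\Bigl(\int_0^\infty\Bigl(\int_0^{s}g(t)\,dt\Bigr)^q U(s)\,ds\Bigr)^{1/q}\ll\Bigl(\int_0^\infty g(s)^p W(s)\,ds\Bigr)^{1/p}
\end{equation*}
for all $g\downarrow 0$, where $U,W$ are the obvious rearranged weights built from $u^*$ and $(1/w)^*$. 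This is exactly where the hypotheses enter: in the case $1<p\le q<\infty$, the Muckenhoupt–Bradley–Sawyer–Stepanov characterization of the weighted Hardy inequality on monotone functions gives that the above holds iff $A_1<\infty$ (the two Muckenhoupt-type conditions collapse to one because $g$ is monotone); in the case $1<q<p<\infty$ with $r=qp/(q-p)$, the Sawyer-type $L^p$–$L^q$ Hardy condition on monotone functions is precisely $A_2<\infty$ as written in \eqref{2e-weight-cond}. I would invoke \cite{M1} (Muckenhoupt) and \cite{H} (Heinig) for these weighted Hardy inequalities; the constant $C_j\le C_{p,q,j}A_j$ is exactly the constant furnished by those theorems.

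The main obstacle, and the step requiring the most care, is establishing the rearrangement majorant $(\h f)^*(s)\ll\int_0^{1/s}f^*(t)\,dt$ with a constant independent of $n$ — the naive split produces an extra tail term $\frac1s\int_{1/s}^\infty f^*(t)t^{-1}\,dt$, and one must argue that this tail is already controlled by the main term after applying the Hardy inequality (equivalently, by absorbing it using the conjugate Hardy operator, whose boundedness on monotone cones under $A_j$ is automatic). A secondary technical point is justifying $\int|\h f|^q u^*\le\int(\h f)^{*q}u^*$ and the passage from $\|f w^{1/p}\|_p$ to an integral of $g^p$ against a rearranged weight — here one uses that on the left we need an \emph{upper} bound so we may use $u^*$, while on the right we need a \emph{lower} bound by a rearranged integral, which is supplied by the Hardy--Littlewood inequality together with the level-function/down-space duality of \cite{Sinnamon}-type, or more simply by noting that $f$ may be taken to be a decreasing radial rearrangement without increasing the right side (a standard reduction for such inequalities, valid because $\h{}$ commutes with radial symmetrization in the relevant one-sided sense). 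Modulo these two reductions, the theorem is a corollary of the one-dimensional weighted Hardy inequality on monotone functions.
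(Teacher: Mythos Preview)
The paper does not prove Theorem~\ref{T1- Pitt-heinig}; it is quoted as a known result due to Heinig~\cite{H}, Jurkat--Sampson~\cite{JS}, and Muckenhoupt~\cite{M1} and used as a black box in the proof of Theorem~\ref{T1-grad-heinig}. There is therefore no paper proof to compare against.

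That said, your sketch follows the strategy of those references (Jurkat--Sampson rearrangement bound for $(\h f)^*$, then a weighted Hardy inequality on the cone of non-increasing functions), but one step is genuinely wrong. You propose to handle the right-hand side by ``noting that $f$ may be taken to be a decreasing radial rearrangement without increasing the right side \ldots\ because $\h{}$ commutes with radial symmetrization in the relevant one-sided sense.'' This is false: the Fourier transform does \emph{not} commute with symmetric decreasing rearrangement, and replacing $f$ by its rearrangement can move $\|\h f\,u^{1/q}\|_q$ in either direction. If that reduction were available, Pitt's inequality would be essentially trivial. The actual mechanism in \cite{H,JS,M1} is the one you mention first and then set aside: one uses Hardy--Littlewood on the \emph{left} to pass to $(\h f)^*$ and $u^*$, applies the Jurkat--Sampson pointwise bound, and then invokes the weighted Hardy inequality on non-increasing functions, whose characterizing condition is stated precisely in terms of $(1/w)^*$ because of the Calder\'on/level-function duality between $\|f\,w^{1/p}\|_p$ and integrals of $f^*$ --- not because $f$ has been symmetrized.

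A second soft spot: the ``clean'' majorant $(\h f)^*(s)\lesssim\int_0^{1/s}f^*$ is not available in general; Jurkat--Sampson's estimate carries both the averaging term and the tail $s^{-1}\int_{1/s}^\infty f^*(t)\,dt$, and the tail is handled by the \emph{dual} Hardy operator, whose boundedness also requires the $A_j$ condition. Your claim that the tail is ``already controlled by the main term'' is not automatic and needs the same Hardy machinery, not a separate absorption argument.
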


Recall that the non-increasing rearrangement of a measurable radially
decreasing function $f(x)=f_{0}(|x|)$ is defined as follows: let for
$\lambda>0$
\[
\mu_{f}(\lambda)=\mu\{x\colon |f(x)|>\lambda\}= \mu\{x\colon
|x|<f_{0}^{-1}(\lambda)\}=(f_{0}^{-1}(\lambda))^{n}V_{n},
\]
where $V_{n}$ is the volume of the unit ball $B^{n}=\{x\in \R^{n}\colon
|x|\le 1\}$. Then for $t>0$
\[
f^{*}(t)=\inf\{\lambda>0\colon \mu_{f}(\lambda)<t\}=
f_{0}((t/V_{n})^{\frac 1 n}).
\]

Note that the conditions on $u$ and $w$ are also necessary when $u$ and $w$
are radial, i.e., $u=u_0(|x|) $ and $w(x)=w_0(|x|)$, with $u_0(r)$
non-increasing and $w_0(r)$ non-decreasing. See \cite{H} and also
\cite[Theorem 1.2 ]{DGT2} for simpler and more general necessary conditions on
the weight $u$ and $w$.
We should also mention \cite[Theorem 2.1]{L} where a necessary condition
similar to that in \cite{H}, with $u$ replaced by a measure $d\mu$, was proved.



%
%
%

\medskip
 We also need the following

\begin{Lemma}\label{lem-psi}
Let $\psi\not\equiv 0$ be a non-increasing non-negative function; let
$\beta_{1},\beta_{2}>0$ and let $\beta_{2}'=\min{}(\beta_{2},1)$. If either
\[
A=\sup_{s>0}s^{(-\beta_{1},-\beta_{2})}\int_0^s \psi(t)\,dt<\infty,
\]
or
\[
B=\sup_{s>0}s^{(1-\beta_{1},1-\beta_{2}')}\psi(s)<\infty,
\]
then $\beta_{1}\le 1$ and $A\asymp B$.
\end{Lemma}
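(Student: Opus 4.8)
The plan is to prove the two equivalences ($A<\infty\iff\beta_1\le1$ and $A\asymp B$) by standard rearrangement/integral manipulations, treating the ranges $0<s\le 1$ and $s\ge 1$ separately because $s^{(-\beta_1,-\beta_2)}$ and $s^{(1-\beta_1,1-\beta_2')}$ are genuinely piecewise. First I would record the elementary monotonicity facts: since $\psi$ is non-increasing and non-negative, $\int_0^s\psi(t)\,dt\ge s\,\psi(s)$, while on the other hand $\int_0^s\psi(t)\,dt\le 2\int_{s/2}^s\psi(t)\,dt+\int_0^{s/2}\psi(t)\,dt$ iterated, or more cleanly $\int_0^s\psi\le s\,\psi(s/2)\cdot(\text{geometric sum})$ type bounds; the cleanest is $\int_{s/2}^s\psi(t)\,dt\ge \tfrac s2\psi(s)$ and $\int_0^s\psi(t)\,dt = \sum_{k\ge0}\int_{2^{-k-1}s}^{2^{-k}s}\psi$. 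These give the two-sided comparison between $\int_0^s\psi$ and a weighted sum/integral of the "pointwise" quantity $t\,\psi(t)$.

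Next I would establish $B\lesssim A$. Given $A<\infty$, for fixed $s>0$ use $\frac s2\,\psi(s)\le\int_{s/2}^s\psi(t)\,dt\le\int_0^s\psi(t)\,dt\le A\,s^{(\beta_1,\beta_2)}$, hence $\psi(s)\lesssim s^{(\beta_1-1,\beta_2-1)}$. When $\beta_2\le1$ this is exactly the bound defining $B$ for $s\ge1$; when $\beta_2>1$ we have $\beta_2'=1$ and need $\psi(s)\lesssim s^{0}=1$ for $s\ge1$, which follows since $\psi$ is non-increasing and bounded by $\psi(1)\lesssim 1^{\beta_1-1}=1$ from the previous step at $s=1$. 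For $0<s\le1$ the estimate $\psi(s)\lesssim s^{\beta_1-1}$ is the right one. So $B\lesssim A$.

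For the reverse, $A\lesssim B$, assume $B<\infty$, so $\psi(t)\le B\,t^{(\beta_1-1,\beta_2'-1)}$. Then $\int_0^s\psi(t)\,dt\le B\int_0^s t^{(\beta_1-1,\beta_2'-1)}\,dt$, and here is where $\beta_1\le1$ must be forced: if $\beta_1>1$ then $\int_0^s t^{\beta_1-1}\,dt=\frac{s^{\beta_1}}{\beta_1}<\infty$ is fine, so the integrability near $0$ is automatic and does \emph{not} by itself force $\beta_1\le1$ — instead the constraint comes from the $A<\infty$ direction applied to $\psi\not\equiv0$: pick a point $t_0$ with $\psi(t_0)>0$; then for $s\ge t_0$, $\int_0^s\psi\ge\int_0^{t_0}\psi=:c_0>0$, so $A\ge\sup_{s\ge t_0}c_0\,s^{(-\beta_1,-\beta_2)}$, which is finite only if the exponent makes $s^{-\beta_j}$ bounded as $s$ ranges over $[t_0,\infty)$ — that needs $\beta_2\ge0$ (true) — hmm, so the true source of $\beta_1\le1$ is the small-$s$ side: for $s\le t_0$ small, $\int_0^s\psi(t)\,dt\ge s\,\psi(s)$ but more usefully $\int_0^s\psi\ge s\,\psi(t_0)$ only if $s\le t_0$ fails... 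I would instead argue: from $B<\infty$, $\psi(t)\le B t^{\beta_1-1}$ for $t\le1$; if $\beta_1>1$ this forces $\psi(t)\to0$ as $t\to0^+$, but $\psi$ is non-increasing, so $\psi\equiv\psi(0^+)$ could still be anything — the contradiction is that $\psi$ non-increasing means $\psi(t)\ge\psi(1)$ for $t\le1$, and if $\psi(1)>0$ then $\psi(t)\ge\psi(1)>0$ while $Bt^{\beta_1-1}\to0$, contradiction; if $\psi(1)=0$ then $\psi\equiv0$ on $[1,\infty)$, and on $(0,1)$ we'd need to look closer — in any case $\psi\not\equiv0$ gives the contradiction. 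So $\beta_1\le1$. I expect this forcing of $\beta_1\le1$ to be the one delicate point; the rest is routine. With $\beta_1\le1$ in hand, $\int_0^s t^{\beta_1-1}\,dt\asymp s^{\beta_1}$ for $s\le1$ and one checks the $s\ge1$ range using $\int_0^s\psi=\int_0^1\psi+\int_1^s\psi\lesssim B+B\int_1^s t^{\beta_2'-1}\,dt\asymp B\,s^{(0,\beta_2')}$, and since $\beta_2'\le\beta_2$ and $\beta_2'\le1$, $s^{\beta_2'}\le s^{\beta_2}$ for $s\ge1$, giving $\int_0^s\psi\lesssim B\,s^{(\beta_1,\beta_2)}$, i.e. $A\lesssim B$. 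Finally, that $A<\infty$ also forces $\beta_1\le1$ follows symmetrically (or simply because $A\asymp B$ and $B<\infty$ forces it), completing the proof.
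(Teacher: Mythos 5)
Your argument is correct and follows essentially the same route as the paper's proof: $s\psi(s)\le\int_0^s\psi(t)\,dt$ yields $B\lesssim A$, the pointwise bound $\psi(t)\le Bt^{(\beta_1-1,\beta_2'-1)}$ integrated and split at $s=1$ yields $A\lesssim B$, and $\beta_1\le 1$ is forced because $\beta_1>1$ would make $\psi(s)\to 0$ as $s\to 0^+$. The one loose end is your sub-case $\psi(1)=0$, which you leave with ``we'd need to look closer''; the paper closes this uniformly by observing that a non-increasing non-negative function with $\lim_{s\to 0^+}\psi(s)=0$ satisfies $\psi(t)\le\psi(s)$ for all $t\ge s$ and hence is identically zero, so no case split on the value of $\psi(1)$ is needed.
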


\begin{proof}
Assume $A<\infty$; then, for every $s>0$, we have that $\int_0^s \psi(t)\,dt\le
As^{(\beta_{1},\beta_{2})}$. Since $\psi$ is non-increasing, $s\psi(s)\le
\int_0^s \psi(t)\,dt$, so $\psi(s)\le As^{(\beta_{1}-1,\beta_{2}-1)}$. If
$\beta_{1}>1$, then $\displaystyle \lim_{s\to 0^+}\psi(s)=0$ and consequently
$\psi\equiv 0$; since we assumed $\psi\not\equiv 0$, necessarily $\beta_{1}\le
1$.

Furthermore, from
$\psi(s)\le \psi(1)$ for $s\ge 1$ we can see at once that $\psi(s)\lesssim As^{\beta_{2}'-1}$ and so
  $B\lesssim A$.

\medskip
If we assume $B<\infty$, for every $s>0$ we have that $\psi(s)\le Bs^{(\beta_{1}-1,\beta_{2}'-1)}$
As above we conclude that $\beta_{1}\le 1$. For $0<s\le 1$ we have $\int_0^s
\psi(t)\,dt\lesssim Bs^{\beta_{1}}$. If $s\ge 1$, then
\[
\int_0^s \psi(t)\,dt=\int_0^1 \psi(t)\,dt+ \int_1^s \psi(t)\,dt\lesssim
B+B\int_1^s t^{\beta_{2}'-1}\,dt\lesssim Bs^{\beta_{2}'}\le Bs^{\beta_{2}}.
\]
Thus, $\sup_{s\ge 1}s^{ -\beta_{2 }}\int_0^s \psi(t)\,dt \lesssim B$ and $A\lesssim B$.
\end{proof}

%
%
%
%

\subsection{Proof of Theorem \ref{T1-grad-heinig}}

We can assume $\ell(x)=\la \mathbf{a},x\ra$, $|\mathbf{a}|=1$, without loss of
generality.

\medbreak (a) \ Let $p\le \gamma\le q$.

\underline{Step~1.} \
For $\tau\ge 0$ and $\xi\in \R^{n}$, define
\[
w_{\tau}(\xi)=|\xi-i\tau
\mathbf{a}|^{\gamma}=(|\xi|^{2}+\tau^{2})^{\frac \gamma 2}.
\]
By Theorem~\ref{T1- Pitt-heinig} (a), the inequality
\begin{equation}\label{pitt-u-wt}
\Bigl(\int_{\R^{n}}|\h g(x)|^{q}u(x)\,dx\Bigr)^{\frac 1 q}\lesssim
A_{u,w_{\tau}}\Bigl(\int_{\R^{n}}w_{\tau}(\xi)|g(\xi)|^{\gamma}\,d\xi\Bigr)^{\frac
1{\gamma} }
\end{equation}
holds with
\[
A_{u,w_{\tau}}=\sup_{s>0}\Bigl(\int_{0}^s u^*(t)\,dt\Bigr)^{\frac{1}{q}}
\Bigl(\int_{0}^{\frac{1}{s}}
((1/w_{\tau})^*(t))^{\frac{1}{\gamma-1}}\,dt\Bigr)^{\frac{1}{\gamma'}}<\infty.
\]

The weight $w_{\tau}$ is radially increasing, so
\begin{equation*}\label{cond-wt}
(1/w_{\tau})^*(t)=((t/V_{n})^{\frac 2n}+\tau^{2})^{-\frac \gamma 2}\asymp (t+\tau^{n})^{-\frac \gamma n}
\end{equation*}
with the constant of equivalence independent of $\tau$. This implies
\begin{equation*}
\int_{0}^{\frac1s} ((1/w_{\tau})^*(t))^{\frac{1}{\gamma-1}}\,dt\asymp
\int_{0}^{\frac1s} (t+\tau^{n})^{-\frac {\gamma'}{n} }\,dt,\quad s>0.
\end{equation*}
Therefore, for $\tau\ge 0$,
\begin{equation}\label{cond-u-wt}
A_{u,w_{\tau}}^{q}\asymp \sup_{s>0}\int_{0}^s u^*(t)\,dt
\Bigl(\int_{0}^{\frac1s}
(t+\tau^{n})^{-\frac {\gamma'}{n} }\,dt\Bigr)^{\frac{q}{\gamma'}}=A_{u}^{q}(\tau).
\end{equation}

Since $(t+\tau^{n})^{-1}\le \max{}(\tau^{-n},1)(t+1)^{-1}$ for $t,\tau>0$, from
\eqref{cond-u-wt} we conclude that
\[
A_{u}(\tau)\le \max{}(\tau^{-1},1)A_{u}(1),\quad \tau>0.
\]

We can give a simple expression for $A_{u}^{q}(0)$. Observing that
$I:=\int_{0}^{1/s} t^{-\frac {\gamma'}{n} }\,dt$ is finite when $-\frac {\gamma'}{n} >-1$
or, equivalently,
$\frac{n}{n-1}<\gamma$, we have that $I\asymp s^{\frac {\gamma'}{n} -1}$. Therefore,
\eqref{cond-u-wt} can be rewritten as
\begin{equation}\label{C1q}
A_{u}^{q}(0)\asymp \sup_{s>0} s^{-q(\frac{1}{\gamma'}-\frac{1}{n})}\int_{0}^s
u^*(t)\,dt.
\end{equation}
By \eqref{C1q} and Lemma \ref{lem-psi} with
$\beta_{1}=\beta_{2}=q(\frac{1}{\gamma'}-\frac{1}{n})$, there holds that
$q({\frac{1}{\gamma'}-\frac{1}{n}})\le 1$ or $\frac{1}{\gamma'}\le
\frac{1}{n}+\frac{1}{q}$ and we can redefine $A_{u}^{q}(0)$ as follows.
$$
 A_{u}^{q}(0)=\sup_{s>0} s^{-q(\frac{1}{\gamma'}-\frac{1}{n})}\int_{0}^s
u^*(t)\,dt \asymp \sup_{s>0}s^{1-q(\frac{1}{\gamma'}-\frac{1}{n})}u^{*}(s).
$$

\smallbreak
\underline{Step~2.} \ Let $g(x)=e^{-\la \tau\mathbf{a},x\ra}f(x)$. Then $g\in
C^\infty_0(\R^{n})$ and
\begin{equation}\label{hat-g-f}
\h{g}(\xi)=\int_{\R^{n}}g(x)e^{-i\la\xi,\, x\ra}\,dx=
\int_{\R^{n}}f(x)e^{-i\la \xi,\, x\ra - \la \tau\mathbf a,\, x\ra}\,dx=\h{f}(\xi-i\tau\mathbf a).
\end{equation}

Since for $g\in C^\infty_0(\R^{n})$ the Fourier inversion formula holds,
\eqref{pitt-u-wt} and \eqref{cond-u-wt} imply
\begin{align}
\Bigl(\int_{\R^{n}}|g(x)|^{q}u(x)\,dx\Bigr)^{\frac 1 q}&\lesssim
A_{u}(\tau)\Bigl(\int_{\R^{n}}|\xi-i\tau\mathbf
a|^{\gamma}|\h{g}(\xi)|^{\gamma}\,d\xi\Bigr)^{\frac 1{\gamma} } \notag\\
&=A_{u}(\tau)\Bigl(\int_{\R^{n}}\bigl|(\xi-i\tau\mathbf
a)\h{f}(\xi-i\tau\mathbf a)\bigr|^{\gamma}\,d\xi\Bigr)^{\frac 1{\gamma}}.
\label{step1}
\end{align}
Note that $\h{f}$ is entire analytic (and so it is defined at $\xi-i\tau\mathbf a$)
because $f$ has compact support. Since
$\h{\nabla f}(\xi)=i\xi \h{f}(\xi),$
 from \eqref{hat-g-f} with $h(x) =(h_1(x),\,\ldots ,\, h_n(x))=e^{-\la \tau\mathbf{a},x\ra}\,\nabla f(x)$ we get
\[
\h{h}(\xi)=\h{\nabla f}(\xi-i\tau\mathbf a)=i(\xi-i\tau\mathbf a)\h{f}(\xi-i\tau\mathbf a).
\]
Hence
\begin{align*}
&\Bigl(\int_{\R^{n}}\bigl|(\xi-i\tau\mathbf a)\h{f}(\xi-i\tau\mathbf
a)\bigr|^{\gamma}\,d\xi\Bigr)^{\frac 1{\gamma} }\\
  {}={}& \Bigl(\int_{\R^{n}}|\h{h}(\xi)|^{\gamma}\,d\xi\Bigr)^{\frac 1{\gamma} }=
\Bigl(\int_{\R^{n}}\Bigl(\sum_{j=1}^{n}|\h{h}_{j}(\xi)|^{2}\Bigr)^{\frac \gamma
2}\,d\xi\Bigr)^{\frac 1{\gamma} }\\
  \le &\Bigl(\int_{\R^{n}}\Bigl(\sum_{j=1}^{n}|\h{h}_{j}(\xi)|\Bigr)^{\gamma}\,d\xi\Bigr)^{\frac 1{\gamma} }\le
\sum_{j=1}^{n}\Bigl(\int_{\R^{n}}|\h{h}_{j}(\xi)|^{\gamma}\,d\xi\Bigr)^{\frac 1{\gamma} },
\end{align*}
where the first inequality holds trivially and the second is Minkowski's
inequality.

Let us use Pitt's inequality with $p\le \gamma$:
\begin{equation}\label{pitt-1v}
\|\h{f}\|_\gamma\le C_{p,\gamma} A_{1,v} \|f\,v^{\frac 1p}\|_p,
\end{equation}
 where
\begin{equation}\label{cond-1-v}
A_{1,v}:=\sup_{s>0}\Bigl(\int_{0}^{\frac{1}{s}} dt\Bigr)^{\frac 1{\gamma} }
\Bigl(\int_{0}^{s}(1/v)^{*}(t)^{\frac{1}{p-1}}\,dt\Bigr)^{\frac 1{p'} }<\infty.
\end{equation}
As in Step 1, we apply Lemma \ref{lem-psi} with
$\beta_{1}=\beta_{2}=\frac {p'} \gamma$. We obtain $\frac {p'} \gamma\leq 1$ and
\[
A_{1,v}^{p'}\asymp \sup_{s>0}s^{-\frac {p'} \gamma}
\int_{0}^{s}(1/v)^{*}(t)^{\frac{1}{p-1}}\,dt\asymp
\sup_{s>0}s^{1-\frac {p'} \gamma}(1/v)^{*}(s)^{\frac{1}{p-1}}.
\]
It follows that
\[
A_{1,v}^{p}= A_{1,v}^{p'(p-1)}\asymp \sup_{s>0}s^{\frac
{p}{\gamma'}-1}(1/v)^{*}(s)=A_{v}^{p}<\infty.
\]
Applying \eqref{pitt-1v} with $f$ replaced by $h_{j}$, $j=1,\ldots,n$, we gather
\begin{align}
\Bigl(\int_{\R^{n}}|\h{h}_{j}(\xi)|^{\gamma}\,d\xi\Bigr)^{\frac 1{\gamma}
}&\lesssim A_{v}\Bigl(\int_{\R^{n}}|h_{j}(x)|^{p}v(x)\,dx\Bigr)^{\frac 1p}
\notag \\ &\lesssim
A_{v}\Bigl(\int_{\R^{n}}\Bigl(\sum_{k=1}^{n}|h_{k}(x)|^{2}\Bigr)^{\frac
p2}v(x)\,dx\Bigr)^{\frac 1p} \notag \\ \label{h-nabla-f}
&=A_{v}\Bigl(\int_{\R^{n}}|e^{-\la \tau\mathbf{a},x\ra}\nabla
f(x)|^{p}v(x)\,dx\Bigr)^{\frac 1p}.
\end{align}

This, together with \eqref{step1} proves part (a) of the theorem.

\smallbreak
(b) Let $1<\gamma\le q<p$. We proceed as in the proof of part (a) to obtain
\eqref{pitt-u-wt}, provided that \eqref{cond-u-wt} holds. We note that we
assume $\frac{n}{n-1}<\gamma$ when $\tau=0$.

Analogously, we get \eqref{h-nabla-f}, but instead of \eqref{cond-1-v}
we use \eqref{2e-weight-cond} with $u=1$, $w=v$ and
$\gamma<p$. Then we have
\begin{align*}
A_{1,v}^{r}&=\int_0^\infty s^{-\frac{r}{\gamma}}
\Bigl(\int_0^{s}(1/v)^*(t)^{\frac{1}{p-1}}\,dt\Bigr)^{\frac{r}{\gamma'}}
(1/v)^*(s)^{\frac{1}{p-1}}\,ds\\
&\asymp\int_0^\infty s^{-\frac{r}{\gamma}}\,\frac{d}{ds}
\Bigl(\int_0^{s}(1/v)^*(t)^{\frac{1}{p-1}}\,dt\Bigr)^{\frac{r}{\gamma'}+1}
 \,ds,
\end{align*}
where $\frac1r=\frac 1{\gamma} -\frac 1p$. After integrating by parts, we get
\[
A_{1,v}^{r}\asymp \int_0^\infty s^{-\frac{r}{\gamma}-1}
\Bigl(\int_0^{s}(1/v)^*(t)^{\frac{1}{p-1}}\,dt\Bigr)^{\frac{r}{p'}}\,ds=
\tilde{A}_{v}^{r}<\infty.
\]

This proves part (b) of the theorem.

\subsection{Corollaries and remarks}
Let us first discuss the conditions on $\gamma$ in Theorem
\ref{T1-grad-heinig}. We recall that in part (a) of Theorem
\ref{T1-grad-heinig} we assume $1<p\le q<\infty$ and $\max{}(p,p')\le \gamma\le
q$; when $\tau=0$ we assume also $\frac{1}{n}<\frac{1}{\gamma'}$.

Note that this extra assumption on $\gamma$ is not necessary when $n\ge 3$.
Indeed, from $\max{}(p,p')\le \gamma\le q$ follows that $2\leq \gamma\leq q$
and $q'\leq \gamma'\leq 2$; thus, $\frac{1}{n}<\frac{1}{\gamma'}$ whenever
$n\ge 3$.

When $n=1$, the inequality $\frac{1}{n}<\frac{1}{\gamma'}$ (or: $\gamma>\frac{n}{n-1}$) can never be satisfied by $\gamma'$ and only the case $\frac{1}{n}\ge \frac{1}{\gamma'}$ is possible. In fact, the condition $\max{}(p,p')\le \gamma\le q$ always implies
$\frac{1}{2}\le \frac{1}{\gamma'}<1$.

   %

If $n=2$ we can either have $\frac{1}{n}<\frac{1}{\gamma'}$ or $\frac{1}{n}\ge
\frac{1}{\gamma'}$. Note that $\frac{1}{2}\ge \frac{1}{\gamma'}$ implies that
$p=\gamma=2$.

\medskip

For applications, it is important to simplify the expression for
$A_{u}^{q}(1)$ in \eqref{u*-cond}. 
Recall that, when $\tau>0$, $A_{u}(\tau)\le \max{}(\tau^{-1},1)A_{u}(1)$ (see
Remark \ref{rem-ctau-bound}). We prove the following

\begin{Cor}\label{cor-Au1}
Let $1< p\leq q<\infty$ and let $\max{}(p,p')\le \gamma\le q$.

\textup{(i)} If $n\ge 2$ and $\frac{1}{n}<\frac{1}{\gamma'}$, then
$\frac{1}{\gamma'}\le \frac{1}{n}+\frac{1}{q}$ and
\begin{equation*}\label{e-sup-u}
A_{u}^{q}(1)\asymp \sup_{s>0}s^{(1-q(\frac{1}{\gamma'}-\frac{1}{n}),\,0)}u^*(s).
\end{equation*}

\textup{(ii)} If $n=2$ and $p=\gamma=2$, then
\[
A_{u}^{q}(1)=\sup_{s>0}\left(\ln{}(s^{-1}+1)\right)^{q/2} \int_{0}^s
u^*(t)\,dt.
\]

\textup{(iii)} If $n=1$, then 
\[
A_{u}^{q}(1)\asymp \sup_{s>0}s^{(0,-\frac{q}{\gamma'})}\int_{0}^s u^*(t)\,dt.
\]
\end{Cor}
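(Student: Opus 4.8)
The plan is to evaluate, up to equivalence, the inner integral $J(s):=\int_{0}^{1/s}(t+1)^{-\gamma'/n}\,dt$ that appears in the definition of $A_{u}^{q}(1)$, and then read off the three formulas. First I would record the elementary facts used throughout: for $a\ne 0$ one has $(1+x)^{a}\asymp x^{a}$ for $x\ge 1$ and $|(1+x)^{a}-1|\asymp x$ for $0<x\le 1$, both with constants depending only on $a$ (they follow from $x\le 1+x\le 2x$ and the mean value theorem). I would also note the ``standing'' inequalities forced by $\max(p,p')\le\gamma\le q$: since $\max(p,p')\ge 2$ we get $\gamma\ge 2$, hence $1<\gamma'\le 2$ and $q\ge\gamma\ge 2\ge\gamma'$; in particular $q/\gamma'\ge 1$ always. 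With $\gamma'\le 2$ this fixes the sign of $1-\gamma'/n$: positive when $n\ge 3$ (and when $n=2$ exactly in the case $\frac1n<\frac1{\gamma'}$), zero only when $n=2,\ \gamma'=2$, i.e. $p=\gamma=2$, and negative only when $n=1$ --- which is precisely the trichotomy (i)/(ii)/(iii).

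Next I would integrate: when $\gamma'/n\ne 1$, $J(s)=\frac{n}{n-\gamma'}\bigl[(1+1/s)^{1-\gamma'/n}-1\bigr]$, and when $\gamma'/n=1$, $J(s)=\ln(s^{-1}+1)$ exactly. Applying the two elementary facts separately on $0<s\le 1$ (so $1/s\ge 1$) and $s\ge 1$ (so $1/s\le 1$) gives $J(s)\asymp s^{(\gamma'/n-1,\,-1)}$ when $\gamma'<n$ and $J(s)\asymp s^{(0,\,-1)}$ when $\gamma'>n$. Substituting into $A_{u}^{q}(1)=\sup_{s>0}J(s)^{q/\gamma'}\int_{0}^{s}u^{*}(t)\,dt$ and using $q/\gamma'\ge 1$ to raise the piecewise power then yields at once: part (ii), with the stated exact logarithmic expression; part (iii), where $\gamma'>n$ forces $n=1$ and we get $A_{u}^{q}(1)\asymp\sup_{s>0}s^{(0,-q/\gamma')}\int_{0}^{s}u^{*}$; and, in the remaining case, $A_{u}^{q}(1)\asymp\sup_{s>0}s^{(-\beta_{1},-\beta_{2})}\int_{0}^{s}u^{*}$ with $\beta_{1}=q\bigl(\frac1{\gamma'}-\frac1n\bigr)$ and $\beta_{2}=q/\gamma'$.

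For part (i) the final step is Lemma~\ref{lem-psi} applied with $\psi=u^{*}\not\equiv 0$, $\beta_{1}=q\bigl(\frac1{\gamma'}-\frac1n\bigr)>0$ (here the hypothesis $\frac1n<\frac1{\gamma'}$ is used) and $\beta_{2}=q/\gamma'>0$: the lemma's dichotomy gives $\beta_{1}\le 1$, that is $\frac1{\gamma'}\le\frac1n+\frac1q$, and $A_{u}^{q}(1)\asymp\sup_{s>0}s^{(1-\beta_{1},\,1-\beta_{2}')}u^{*}(s)$ with $\beta_{2}'=\min(\beta_{2},1)$; since $q/\gamma'\ge 1$ we have $\beta_{2}'=1$, so the second exponent is $0$ and the displayed formula follows. (If $A_{u}^{q}(1)=\infty$ the right-hand side is $\infty$ too, as $u^{*}$ is bounded below near $0$, so the equivalence is trivial and $\beta_{1}\le1$ is read off under the standing hypothesis of finiteness, exactly as in Step~1 of the proof of Theorem~\ref{T1-grad-heinig}.) I do not foresee a genuine obstacle: the only point needing care is tracking the piecewise exponents through the power $q/\gamma'$ and matching the $s\ge1$ exponent to $0$, which is exactly where $q\ge\gamma'$ enters.
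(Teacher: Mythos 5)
Your proposal is correct and follows essentially the same route as the paper: evaluate $\int_0^{1/s}(t+1)^{-\gamma'/n}\,dt$ up to equivalence as a piecewise power (resp.\ a logarithm when $\gamma'=n=2$), raise to the power $q/\gamma'$, and in case (i) invoke Lemma~\ref{lem-psi} with $\beta_1=q(\tfrac{1}{\gamma'}-\tfrac1n)$, $\beta_2=\tfrac{q}{\gamma'}\ge 1$ to pass from $\int_0^s u^*$ to $u^*(s)$ and to obtain $\tfrac{1}{\gamma'}\le\tfrac1n+\tfrac1q$. You are merely more explicit than the paper (which declares (ii) ``obvious'' and states the asymptotics of the inner integral without the antiderivative), and your handling of the finiteness assumption matches the paper's standing hypothesis.
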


\begin{proof}
(i) Recall that
\[
A_{u}^{q}(\tau)=
 \sup_{s>0} \int_{0}^s u^*(t)\,dt
\Bigl(\int_{0}^{\frac{1}{s}}
(t+\tau^{n})^{-\frac {\gamma'}{n} }\,dt\Bigr)^{\frac q{\gamma'}},\quad \tau>0.
\]
For $\tau=1$ and $\frac{1}{n}<\frac{1}{\gamma'}$, we have
\begin{align*}
\Bigl(\int_{0}^{\frac{1}{s}}
(t+1)^{-\frac {\gamma'}{n} }\,dt\Bigr)^{\frac{q}{\gamma'}}&\asymp
\bigl((s^{-1}+1)^{1-\frac {\gamma'}{n} }-1\bigr)^{\frac q {\gamma'} }\\
&\asymp s^{(-q(\frac{1}{\gamma'}-\frac{1}{n}),\,-\frac{q}{\gamma'})},\quad s>0.
\end{align*}
Hence
\[
A_{u}^{q}(1)\asymp
 \sup_{s>0} s^{(-q(\frac{1}{\gamma'}-\frac{1}{n}),\,-\frac{q}{\gamma'})}
 \int_{0}^s u^*(t)\,dt,
\]
where $q(\frac{1}{\gamma'}-\frac{1}{n})>0$ and $\frac{q}{\gamma'}\ge 1$, since
$\gamma'\le 2\le q$.

Now we can apply Lemma \ref{lem-psi} with
$\beta_{1}=q(\frac{1}{\gamma'}-\frac{1}{n})$, $\beta_{2}=\frac{q}{\gamma'}\ge
1$. We obtain $q(\frac{1}{\gamma'}-\frac{1}{n})\ge 1$ or $\frac{1}{\gamma'}\le
\frac{1}{n}+\frac{1}{q}$ and
\[
A_{u}^{q}(1)\asymp
\sup_{s>0}s^{(1-q(\frac{1}{\gamma'}-\frac{1}{n}),\,0)}u^*(s).
\]

Part (ii) is obvious. To prove part (iii), we note that $\gamma'>1$, which
gives $\int_{0}^{\frac{1}{s}} (t+1)^{-\gamma'}\,dt\asymp s^{(0,-1)}$.
\end{proof}




\begin{proof}[Proof of Corollary \ref{Cor-power-weight}] Recall that in this corollary
$
u(x)=|x|^{(-\alpha_{1},-\alpha_{2})}$, $v(x)=|x|^{(\beta_{1},\beta_{2})}$ with
$\alpha_j,\,\beta_j\ge 0$. We consider the case when $1<p\le q<\infty$ and
$\gamma\in [\max{}(p,p'),q]$, with $\frac{1}{n}<\frac{1}{\gamma'}\le
\frac{1}{n}+\frac{1}{q}$, and we let $\tau=0$ or $\tau=1$.

Since $w^{*}(s)\asymp w_{0}(s^{\frac 1n})$, $s>0$, for any non-increasing radial
weight function $w(x)=w_{0}(|x|)$ we have
\[
u^{*}(s)\asymp s^{(-\frac{\alpha_{1}}{n},-\frac{\alpha_{2}}{n})},\quad
(1/v)^{*}(s)\asymp s^{(-\frac{\beta_{1}}{n},-\frac{\beta_{2}}{n})}.
\]
whenever $\alpha_{j},\beta_{j}\ge 0$.

The expression \eqref{u*-cond}, Corollary \ref{cor-Au1} (i), and \eqref{v*-cond}
imply that for $s>0$
\[
u^{*}(s)\lesssim \begin{cases}
s^{q(\frac{1}{\gamma'}-\frac{1}{n})-1},& \tau=0,\\
s^{(q(\frac{1}{\gamma'}-\frac{1}{n})-1,0)},& \tau=1,
\end{cases}\qquad
(1/v)^{*}(s)\lesssim s^{1-\frac {p}{ \gamma'}}.
\]

It is easy to see that when $a_{j},b_{j}\ge 0$, the inequality $s^{(-a_{1},-a_{2})}\lesssim s^{(-b_{1},-b_{2})}$,
  holds if and only if $a_{1}\le b_{1}$, $a_{2}\ge b_{2}$. It follows
that
\[
\alpha_{1}\le n\Bigl(1-\frac{q}{\gamma'}+\frac{q}{n}\Bigr),\quad
\begin{cases}
\alpha_{2}\ge n\bigl(1-\frac{q}{\gamma'}+\frac{q}{n}\bigl),& \tau=0,\\
\alpha_{2}\ge 0,& \tau=1,
\end{cases}
\]
and
\[
0\le \beta_{1}\le n\Bigl(\frac{p}{\gamma'}-1\Bigr),\quad
\beta_{2}\ge n\Bigl(\frac{p}{\gamma'}-1\Bigr)
\]
which proves \eqref{cond-alpha} and \eqref{cond-beta}.

\medskip
 {To prove} \eqref{ab-n-cond} we use a standard homogeneity argument. Let
us consider \eqref{u-v-a-ineq} (which by Remark
\ref{rem-u-v-a-ineq} is equivalent to \eqref{e2-weighted-grad}) with $f=f_{\lambda}(x)=f(\lambda x)$ for some $f\in
C^\infty_0(\R^n)$ and $\lambda>0$. We obtain
\[
\||x|^{-\frac{\alpha}{q}}f_{\lambda}\|_q\le
c_\tau\||x|^{\frac{\beta}{p}}\,(\tau \mathbf{a}f_{\lambda}+\lambda (\nabla
f)_\lambda\|_p.
\]
After the change of variables $x\mapsto \lambda^{-1}x$,
we get
\begin{equation}\label{e-reduced}
\lambda^{\frac{\alpha}{q}-\frac{n}{q}+\frac{\beta}{p}+\frac{n}{p}-1}
\||x|^{-\frac{\alpha}{q}}f\|_q\le c_\tau
\||x|^{\frac{\beta}{p}}\,(\lambda^{-1}\tau \mathbf{a}f+\nabla f)\|_p.
\end{equation}
The limits of the two sides of the inequality \eqref{e-reduced}, as $\lambda\to 0$ or as $\lambda \to \infty$, must be the same. If $\tau=0$ the right-hand side of \eqref{e-reduced} does not depend on $\lambda$, so we must have $\frac{\alpha}{q}-\frac{n}{q}+\frac{\beta}{p}+\frac{n}{p}-1=0$.

If $\tau>0$, we must have
\[
\lambda^{\frac{\alpha}{q}-\frac{n}{q}+\frac{\beta}{p}+\frac{n}{p}-1}\lesssim
\begin{cases}
 \lambda^{-1}, & \lambda\to 0,\\ 1,& \lambda\to
\infty.
\end{cases}
\]
so necessarily $\frac{\alpha}{q}-\frac{n}{q}+\frac{\beta}{p}+\frac{n}{p}-1\leq 0$.

\end{proof}

\section{Uniqueness problems}\label{sec-unic-prob}

In this section and in Section~\ref{sec-lin-sys} we use the inequality
(\ref{e-weighted-grad-tau}) to prove uniqueness questions for solutions of
partial differential equations and systems.
 First, we state some definitions and preliminary results.

Let $\alpha=(\alpha_1, \, \dots,\, \alpha_n)$ be a vector with non-negative
integer components; we use the notation $|\alpha|=\alpha_1+\dots+\alpha_n$
and $\partial^\alpha_x f =
\frac{\partial^{\alpha_1}}{\partial^{\alpha_1}_{x_1}}\cdots
\frac{\partial^{\alpha_n}}{\partial^{\alpha_n}_{x_n}}f $.

Let $D\subset \R^n$ open and connected and let $1\leq p<\infty$. Recall that $W^{m, p}_0(D)$ is the closure of $C^\infty_0(D)$ with respect to the Sobolev norm $\|f\|_{W^{m,p}_0(D)}= \sum_{ |\alpha|=0}^m \| \partial^\alpha_x f\|_p$,
       When $m=1$, and $D $ is bounded in at least one direction, the classical Poincare' inequality states that $\|f\|_{L^p(D)}\leq C\|\nabla f\|_{L^p(D)}$
       (see e.g. \cite{Brezis}); thus, the Sobolev norm in $W^{1,p}_0(D)$ is equivalent to $ \|\nabla f\|_{L^p(D)}$.

Given the weight $v\colon D \to\ [0, \infty]$ and $1\leq p<\infty$, we let
$W^{m, p, v }_0(D)$ be the closure of $C^\infty_0(D)$ with respect to the norm
$\|f\|_{W^{1,p,v} _0(D)}= \sum_{|\alpha|=0}^m\|v^{\frac 1p}
\partial^\alpha_x f\|_{p}$. We use the standard notation $L^{p,v}(D)$ or $L^p(D
,\, v\,dx)$ for the closure of $C^\infty_0(D)$ with respect to the norm
$\|v^{\frac 1p} f\|_{p}$.

Let $P(\partial) =\sum_{|\alpha|=0 }^ma_\alpha \partial^\alpha_x $ be a linear
 partial differential operator of order $m>0$ with complex constant coefficients.
We let $P(-\partial)u= \sum_{|\alpha|=0 }^m
\overline{a_\alpha}\,(-1)^{|\alpha|}\partial^\alpha_x u$.

A {\it weak solution} (or: a {\it solution in distribution sense}) of the equation
$P(\partial)f=0$ on a domain $D\subset\R^n$ is a distribution $f\in W^{m,p}(D)$ that satisfies $\int
_{D} f(x)\, P(-\partial)\phi (x)\, dx=0 $ for every $\phi\in
C^\infty_0(D)$.
Weak solutions for non linear partial differential operators can be defined on
a case-by-case basis. See e.g. \cite{E} or other standard textbooks on partial
differential equations for details. We will often consider differential
inequalities in the form of $|P(\partial) f| \leq |V f|$ on a given domain $D$;
by that we mean that the inequality $|P(\partial) f(x)| \leq |V f(x)|$ is
satisfied a.e. in $D$, i.e., it is satisfied pointwise with the possible
exception of a set of measure zero.

\medskip

\subsection{Unique continuation and Carleman method}
Let $P(\partial)$ be a homogeneous partial differential operator of order $m\ge
1$. Clearly, $f\equiv 0$ is a solution of the equation $P(\partial)f=0$ on any
domain $D\subset \R^n$. It is natural to ask whether this equation has also
nontrivial solutions, i.e., distributions in some suitable Sobolev space that
satisfy the equation in distribution sense and are not identically $=0$. In
particular it is natural to ask whether (1), (2) or (3) below are satisfied or not
on a given domain $D$.


 \begin{enumerate}


 \item \textit{Uniqueness for the Dirichlet problem.} The only solution of the Dirichlet problem
 $\begin{cases} P(\partial)f=0, \\ f\in W_0^{m,p}(D) \end{cases}\hskip-1em$ \quad is $ f\equiv 0$.

 \item \textit{Weak unique continuation property }
 (or: unique continuation from an open set). Every solution of the equation $P(\partial)f=0$ which is $ \equiv 0$ on an open subset of $D$ is $ \equiv 0$.

\item \textit{Strong continuation property } (or: unique continuation from a point). Let $x_0\in D$. Every solution of the equation $P(\partial)f=0$ that satisfies
\[
\lim_{r\to 0} r^{-N} \int_{|x- x_0|<r} |f(x)|^2\,dx =0
\]
for every $N>0$ is $ \equiv 0$.

  \end{enumerate}

For other relevant unique continuation problems see the survey paper~\cite{T}.

 Historically, the study of unique continuation originated from the uniqueness for the Cauchy problem;
 an equally strong motivation arose from some
fundamental questions in mathematical physics, with the study of the eigenvalues of the time-independent Schr\"odinger operator
$
H = -\Delta +V \
$
as a notable example. See \cite{S} and \cite{Simon} and also \cite{KT} and the references cited there.
 %

In 1939 T. Carleman used in \cite{C} a new weighted Sobolev inequality to show that the Schr\"odinger operator $H=-\Delta+V$ has the strong unique continuation property when $n=2$ and $V$ is bounded. Carleman's original idea has permeated the large majority of results on unique continuation. The weighted Sobolev inequality that he used in his proof has been widely generalized and applied to a vast array of problems in unique continuation and control theory.

A {\it Carleman-type inequality} for a differential operator $P(\partial)$ is a weighted
inequality of the form of
\begin{equation}\label{Carleman estimates}
\| \eta ^{\tau_k} f\|_{q} \leq
C\|\eta ^{\tau_k} P(\partial)f\|_{p},
\qquad f\in C^{\infty}_0(D),
\end{equation}
where $\eta\colon D\to [0,1)$, the sequence $\{\tau_k\}_{k\in\N} \subset (0,
\infty) $ increases to $+\infty$, the constant $C$ is independent of  the sequence of the $\tau_k$
and of $f$, and $1\leq p\leq q <\infty$. If \eqref{Carleman estimates} holds with a
suitable function $\eta$, a version of the argument used in the proof of
Theorem \ref{T1-UC-grad} can be applied to show that the operator $Q(\partial)
= P(\partial) -V(x) $ has the unique continuation property (2) or (3) (or some
variation of these properties) whenever $V\in L^{\frac{pq}{q-p}}(D)$.

The
literature on Carleman inequalities and unique continuation is very extensive.
A sample of references on unique continuation problems for second order
elliptic operators include the important \cite{J, JK, KRS, So} and the survey
papers \cite{W, KT2, T}.

The inequality \eqref{e2-weighted-grad} in Theorem \ref{T1-grad-heinig} can be
viewed as a weighted Carleman-type inequality for the operator
$P(\partial)f=|\nabla f|$. To the best of our knowledge, the inequality
\eqref{e2-weighted-grad} is new in the literature, even when $u(x)\asymp
v(x)\asymp 1$.

\subsection{Proof of Theorem \ref{T1-UC-grad}}

In this section we prove Theorem \ref{T1-UC-grad} and some corollary.

\begin{proof}[Proof of Theorem \ref{T1-UC-grad}]
Assume for simplicity that $f\equiv 0$ when $x_n<0$ (the proof is similar in
the general case). It is enough to show that $f\equiv 0$ also on the strip
$S_\epsilon=\{x\colon 0<x_n<\epsilon\}$, where $\epsilon>0$ will be determined
during the proof. Using Theorem \ref{T1-grad-heinig}~(a) with $\mathbf{a} =
(0,\ldots,0,1)$, $\tau\ge 1$ and   $c_\tau\le c_1$ (see
Remark~\ref{rem-ctau-bound}), the differential inequality \eqref{ineq-V-nable}
and H\"older's inequality with $\frac 1p=\frac 1q+\frac 1r$, we can write the
following chain of inequalities:
\begin{align*}
\|e^{-\tau x_n} fu^{\frac 1q}\|_{L^q(S^\epsilon)} & \leq c_1  \|e^{-\tau x_n}\nabla fv^{\frac 1p}\|_{L^p(\R^n)}
\\ & \leq c_1 \|e^{-\tau x_n}\nabla fv^{\frac 1p}\|_{L^p(S_\epsilon)} + c_1\|e^{-\tau x_n}\nabla fv^{\frac 1p}\|_{L^p(\{x_n>\epsilon\})}
\\
& \leq c_1 \|e^{-\tau x_n} f V \,v^{\frac 1p}\|_{L^p(S_\epsilon)} + c_1e^{-\tau \epsilon }\|\nabla fv^{\frac 1p}\|_{L^p(\{x_n>\epsilon\})}
\\
& \leq c_1\| V \,v^{\frac 1p} u^{-\frac 1q}\|_{L^r(S_\epsilon\cap \supp
f)}\|e^{-\tau x_n} fu^{\frac 1q}\|_{L^q(S^\epsilon)} + C'e^{-\tau \epsilon }.
\end{align*}
Here,  $\frac 1r=\frac 1p-\frac 1q$ and we have let $C'=c_1\|\nabla
fv^{\frac 1p}\|_{L^p(\{x_n>\epsilon\})}$. Note that    $C'$ does not depend on
$\tau$.

Since $V\in L^r (\supp f, \, v^{\frac rp} u^{-\frac rq}\,dx)$ we
can chose $\epsilon>0$ so that $c_1\|V \,v^{\frac 1p} u^{-\frac
1q}\|_{L^r(S_\epsilon\cap \supp f)} <\frac 12$. From the chain of inequalities
above, follows that
$$
\|e^{-\tau x_n} fu^{\frac 1q}\|_{L^q(S^\epsilon)}\leq \frac 12\,\|e^{-\tau x_n}
fu^{\frac 1q}\|_{L^q(S^\epsilon)}+ C'e^{-\tau \epsilon }.
$$
We gather
$$
\frac 12\,\|e^{ \tau (\epsilon-x_n)} fu^{\frac 1q}\|_{L^q(S^\epsilon)} \leq C'.
$$
Since $\epsilon-x_n>0$ on $S^\epsilon$, if $f\not\equiv 0$ the left-hand side of this inequality goes to infinity when $\tau$ goes to infinity; this is a contradiction because $C'$ does not depend on $\tau$ and so necessarily $f\equiv 0$ in $S_\epsilon$.
\end{proof}

%
%

\begin{Cor}\label{C-UC-power-weights}
Let $p$, $q$ and $\gamma$ be as in Theorem~\textup{\ref{T1-grad-heinig}~(a)}.
Let $u=|x|^{(-\alpha_1, -\alpha_2)}$ and $v= |x|^{(\beta_1,\, \beta_2)}$, with
$0\le \alpha_{1}\le n\bigl(1-\frac{q}{\gamma'}+\frac{q}{n}\bigr)$,
$\alpha_{2}\ge 0$ and $0\le \beta_{1}\le n\bigl(\frac{p}{\gamma'}-1\bigr)$,
$\beta_{2}\ge n\bigl(\frac{p}{\gamma'}-1\bigr)$.
Let $V=|x|^{(s_1, s_2)}$, with
\begin{equation}\label{e-1}
s_1>-\frac{n}{r}-\frac{\alpha_{1}}{q}-\frac{\beta_{1}}{p},
\end{equation}
and, if $\supp f$ is unbounded,
\begin{equation}\label{e-2}
s_2< -\frac {\alpha_2}{q}-\frac{\beta_2}{p} -\frac nr.
\end{equation}
Then, every solution 
of the differential inequality
$
|\nabla f|\leq V |f| $ is $\equiv 0$.
\end{Cor}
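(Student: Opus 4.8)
The plan is to reduce everything to Theorem~\ref{T1-UC-grad}: once we know that the power-type potential $V=|x|^{(s_1,s_2)}$ belongs to the weighted Lebesgue space $L^r(\supp f,\,v^{r/p}u^{-r/q}\,dx)$ with $\frac1r=\frac1p-\frac1q$, the conclusion $f\equiv0$ is immediate from that theorem (whose standing hypotheses---$f\in W^{1,p,v}_0(\R^n)$ solving $|\nabla f|\le V|f|$ and $\supp f$ contained in a half-space $\{\ell\ge0\}$---are in force here). First I would observe that the assumptions placed on $\alpha_1,\alpha_2,\beta_1,\beta_2$ are exactly conditions \eqref{cond-alpha} and \eqref{cond-beta} of Corollary~\ref{Cor-power-weight} in the case $\tau>0$; hence $u(x)=|x|^{(-\alpha_1,-\alpha_2)}$ and $v(x)=|x|^{(\beta_1,\beta_2)}$ satisfy \eqref{u*-cond} and \eqref{v*-cond} for the prescribed $p,q,\gamma$, so Theorem~\ref{T1-grad-heinig}~(a), and therefore Theorem~\ref{T1-UC-grad}, apply to this pair of weights. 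Thus only the integrability of $V$ must be checked.

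Next, exactly as in the proof of Corollary~\ref{Cor-power-weight} (using $w^*(s)\asymp w_0(s^{1/n})$ for radial weights), I would compute the density of $|V|^r$ against $v^{r/p}u^{-r/q}\,dx$:
\[
|V(x)|^{r}\,v(x)^{r/p}\,u(x)^{-r/q}=|x|^{\,r(s_1,s_2)+\frac rp(\beta_1,\beta_2)+\frac rq(\alpha_1,\alpha_2)},
\]
which behaves like $|x|^{\,r(s_1+\beta_1/p+\alpha_1/q)}$ for $|x|\le1$ and like $|x|^{\,r(s_2+\beta_2/p+\alpha_2/q)}$ for $|x|\ge1$. Since $|x|^{\kappa}$ is integrable over the unit ball of $\R^n$ precisely when $\kappa>-n$ and over the complement of the unit ball precisely when $\kappa<-n$, the exponent near the origin exceeds $-n$ exactly when \eqref{e-1} holds, while the exponent near infinity is strictly below $-n$ exactly when \eqref{e-2} holds.

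It follows that \eqref{e-1} alone gives $V\in L^r_{\mathrm{loc}}(\R^n,\,v^{r/p}u^{-r/q}\,dx)$, which by the remark following Theorem~\ref{T1-UC-grad} already suffices when $\supp f$ is compact; and \eqref{e-1} together with \eqref{e-2} gives $V\in L^r(\R^n,\,v^{r/p}u^{-r/q}\,dx)$, hence in particular $V\in L^r(\supp f,\,v^{r/p}u^{-r/q}\,dx)$ when $\supp f$ is unbounded. In either case the hypothesis of Theorem~\ref{T1-UC-grad} on $V$ is satisfied, so that theorem yields $f\equiv0$. I do not anticipate any genuine difficulty: the argument is bookkeeping of the exponents in the three piecewise-power factors plus the bounded/unbounded dichotomy for $\supp f$; the only points to keep in mind are that $\frac1r=\frac1p-\frac1q$ must be used consistently and that the membership $f\in W^{1,p,v}_0(\R^n)$ and the half-space support condition are those carried over from Theorem~\ref{T1-UC-grad}.
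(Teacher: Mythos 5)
Your proposal is correct and follows essentially the same route as the paper: verify via Corollary~\ref{Cor-power-weight} (case $\tau>0$) that the weights satisfy the hypotheses of Theorem~\ref{T1-UC-grad}, then check that $Vv^{1/p}u^{-1/q}=|x|^{(t_1,t_2)}$ with $t_j=s_j+\alpha_j/q+\beta_j/p$ lies in $L^r(\supp f)$ exactly under \eqref{e-1} (and \eqref{e-2} when $\supp f$ is unbounded). The exponent bookkeeping and the bounded/unbounded dichotomy are handled exactly as in the paper's argument.
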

\begin{proof}
The weights $u$ and $v$ are as in Corollary \ref{Cor-power-weight}, so the
inequality \eqref{e2-weighted-grad} holds with $\tau>0$. By
Theorem~\ref{T1-UC-grad}, every solution 
of the differential inequality $ |\nabla f|\leq V |f| $ is $\equiv 0$ whenever
$Vv^{\frac 1p}u^{-\frac 1q}\in L^r(\supp f)$. We can see at once that
$Vv^{\frac 1p}u^{-\frac 1q}=|x|^{(t_1, t_2)}\in L^r(\supp f)$ if and only if
$t_1= s_1+\frac {\alpha_1}{q}+\frac{\beta_1}{p}>-\frac nr$ and, if $\supp f$ is
unbounded, $t_2= s_2+\frac {\alpha_2}{q}+\frac{\beta_2}{p}<-\frac nr$, which is
equivalent to \eqref{e-1} and \eqref{e-2}. This concludes the proof.
\end{proof}

\begin{Rem}\label{rem-Vepsilon}
From the inequalities above and the assumptions on $\alpha_j$, $\beta_j$, and
$\gamma'$ (see Corollary~\ref{Cor-power-weight}) follows that
\begin{align*}
t_1 &\leq s_1+\frac nq\Bigl(1-\frac{q}{\gamma'}+\frac qn\Bigr)+\frac
np\Bigl(\frac p{\gamma'}-1\Bigr)=s_1-\frac nr +1.
\\
t_2&\ge s_2+\frac np\Bigl(\frac p{\gamma'}-1\Bigr)= s_2+\frac{n}{\gamma'}-\frac
np> s_2-\frac np+1.
\end{align*}
The condition $t_1>-\frac nr$ yields $s_1> -1$. We can see at once that
$t_2<-\frac nr$ yields $s_2<\frac nq-1$. In particular, $V= |x|^{-1+\epsilon}$
with $0<\epsilon <\frac nq$, satisfies the assumptions of Corollary
\ref{C-UC-power-weights}. If $f$ has compact support, then we can
omit the condition on $t_2$ and assume only $\epsilon>0$.

Potentials $V(x)= C|x|^{-s}$, with $s,\,C>0$ are known as \textit{Hardy
potentials} in the literature. They appear in the relativistic Schr\"odinger
equations and in problem of stability of relativistic matter in magnetic
fields. See e.g. \cite{He} and the introduction to \cite{F} and \cite{FF}, just
to cite a few.

It is proved in \cite{DO} that when $\mathcal{L}$ is the Dirac operator in
dimension $n\ge 2$ (see Section~\ref{ssec-dirac}) the differential inequality
$|\mathcal{L}f| \leq C |x|^{-1} |f|$ has the strong unique continuation
property from the point $x_0=0$ whenever $C\leq 1$. We conjecture that also the
differential inequalities $|\nabla f|\leq C |x|^{-1} |f|$
has the strong unique continuation
property from  the origin  when $C$ is sufficiently small.
\end{Rem}

\subsection{Proof of Theorem~\ref{T-UC-dir1}}

Recall that the solution $f$ of the Dirichlet problem \eqref{e-dir1} is
intended
in distribution sense, i.e., $f$ satisfies
\begin{equation}\label{e-pde1}
\int_D \la\nabla \psi,\nabla f\ra|\nabla f|^{p-2}\, v\,dx= \int_D \psi\,Vf |f|^{p-2} \, v\, dx
\end{equation} for every $\psi\in C^\infty_0(D)$.
To prove Theorem \ref{T-UC-dir1} we need two important lemmas:
\begin{Lemma}\label{L-4.1} Suppose that the weighted gradient inequality
\begin{equation}\label{e1-weighted-grad1}
\|u^{\frac 1q} f\|_q\leq c_0\|v^{\frac 1p} \nabla f\|_p, \quad f\in
C^\infty_0(D)
\end{equation}
holds with exponents $1\leq p,\,q <\infty$. Then the space $W^{1,p,v}_0(D)$
embeds into $L^q(D , u\,dx)$ and $ \|f\|_{L^q(D , \, u\,dx)}\leq c_0\| \nabla
f\|_{L^p(D ,\, v\,dx)}$.
\end{Lemma}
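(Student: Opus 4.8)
The plan is the standard density (completion) argument: the inequality \eqref{e1-weighted-grad1} is assumed on the dense subset $C^\infty_0(D)$, and $W^{1,p,v}_0(D)$ is by definition its closure in the norm $\|\cdot\|_{W^{1,p,v}(D)}$, so it only remains to pass to the limit. I would first recall that, for $m=1$, $\|v^{1/p}\nabla g\|_{L^p(D)}\le \|g\|_{W^{1,p,v}(D)}$ for every $g\in C^\infty_0(D)$ (since $|\nabla g|\le\sum_j|\partial_{x_j}g|$ pointwise, so the $L^p(D,v\,dx)$ norm of $|\nabla g|$ is at most $\sum_j\|v^{1/p}\partial_{x_j}g\|_{L^p(D)}$). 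Consequently, if $\{f_k\}\subset C^\infty_0(D)$ is Cauchy in $\|\cdot\|_{W^{1,p,v}(D)}$, then $\{v^{1/p}\nabla f_k\}$ is Cauchy in $L^p(D)$, and its limit — denoted $v^{1/p}\nabla f$ and independent of the chosen approximating sequence — is precisely the quantity appearing on the right-hand side of the asserted estimate for the corresponding element $f\in W^{1,p,v}_0(D)$.

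Next I would establish the bound on the dense set and extend it by continuity. Applying \eqref{e1-weighted-grad1} to a difference $g_1-g_2$ of functions in $C^\infty_0(D)$ gives $\|u^{1/q}(g_1-g_2)\|_{L^q(D)}\le c_0\|v^{1/p}\nabla(g_1-g_2)\|_{L^p(D)}\le c_0\|g_1-g_2\|_{W^{1,p,v}(D)}$, so $g\mapsto u^{1/q}g$ is Lipschitz from $(C^\infty_0(D),\|\cdot\|_{W^{1,p,v}(D)})$ into $L^q(D)$. Hence, for $f\in W^{1,p,v}_0(D)$ with an approximating sequence $\{f_k\}$, the sequence $\{u^{1/q}f_k\}$ is Cauchy in $L^q(D)$ and converges to some $g\in L^q(D)$; applying the same Lipschitz bound to $f_k-\tilde f_k$ shows that $g$ is the same for any other approximating sequence, so $f\mapsto g$ is a well-defined continuous linear map $W^{1,p,v}_0(D)\to L^q(D,u\,dx)$ that restricts to the identity on $C^\infty_0(D)$ — this is the claimed embedding. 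Finally, letting $k\to\infty$ in $\|u^{1/q}f_k\|_{L^q(D)}\le c_0\|v^{1/p}\nabla f_k\|_{L^p(D)}$, where the left side tends to $\|f\|_{L^q(D,u\,dx)}$ and the right side to $c_0\|\nabla f\|_{L^p(D,v\,dx)}$, yields the stated inequality.

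The only point requiring any care — and the natural candidate for the ``main obstacle'' — is the well-definedness of the limit $g$ and, if one wants the literal pointwise identity $g=u^{1/q}f$ underlying the notation $\|f\|_{L^q(D,u\,dx)}$, the identification of $g$ with $f$. Both are handled by the difference trick above; the pointwise identification additionally follows by passing to a common almost everywhere convergent subsequence of $v^{1/p}f_k\to v^{1/p}f$ in $L^p(D)$ and of $u^{1/q}f_k\to g$ in $L^q(D)$, which is legitimate at least when $v$ is positive and finite almost everywhere. No genuine difficulty arises beyond this bookkeeping.
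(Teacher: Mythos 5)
Your proof is correct and follows essentially the same route as the paper: apply the inequality to differences of an approximating sequence from $C^\infty_0(D)$, conclude that the sequence is Cauchy in $L^q(D,u\,dx)$, and pass to the limit in the inequality. Your extra care about the well-definedness of the limit and its identification with $f$ (via a.e.\ convergent subsequences) addresses a point the paper's proof glosses over, but it is bookkeeping rather than a different argument.
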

\begin{proof}
Fix $f\in W^{1,p,v}_0 (D)$; let $\{f_n\}_{n\in\N}\subset C^\infty_0(D)$ be a
sequence that converges to $f$ in the Sobolev norm
$\|{\,\cdot\,}\|_{W^{1,p,v}_0 (D)}$. Thus, $\{f_n\}$ is a Cauchy sequence in
$W^{1,p,v}_0 (D)$; for every $\epsilon>0$ we can chose $N>0$ such that
$$
 \| f_n-f_m\|_{W^{1,p,v}(D)} = \| v^{\frac 1p}(f_n-f_m)\|_{L^p(D)}+\|v^{\frac 1p} \nabla( f_n-f_m )\|_{L^p(D)}<\epsilon
$$
whenever $n,\,m>N$; thus, $\|v^{\frac 1p} \nabla( f_n-f_m
)\|_{L^p(D)}<\epsilon$. By \eqref{e1-weighted-grad1},
$$
\|u^{\frac 1q} ( f_n-f_m )\|_{L^q (D)}\leq c_0\|v^{\frac 1p} \nabla( f_n-f_m )\|_{L^p(D)}<c_0\epsilon.
$$
We have proved that $\{f_n\}$ is a Cauchy sequence in $L^q(D , \, u\,dx)$ (which is complete) and so it converges to $f$ also in $ L^q(D , \, u\,dx)$.
 We gather
\begin{align*}
 \|f \|_{L^q(D , \, u\,dx)}&=\lim_{n\to\infty} \|f_n \|_{L^q(D , \, u\,dx)}\leq
 c_0 \lim_{n\to\infty} \|\nabla f_n \|_{L^p(D , \, v\, dx)}\\
 &= c_0\|\nabla f \|_{L^p(D , \, v\, dx)}
\end{align*}
 as required.
 \end{proof}

\begin{Lemma}\label{L-GreenG} Suppose that the weighted gradient inequality
\eqref{e1-weighted-grad1} holds with $1<p<q$. Let $f$ be a solution to the
Dirichlet problem \eqref{e-dir1}, with $|V|^{\frac 1p}\in L^r (D, \, v^{\frac
rp} u^{-\frac rq}\,dx)$. We have
\begin{equation*}\label{greenh2}
\int_D |\nabla f|^p v\,dx= \int_D V |f|^p v\,dx.
\end{equation*}
\end{Lemma}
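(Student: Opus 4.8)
The plan is to test the weak formulation \eqref{e-pde1} of the Dirichlet problem \eqref{e-dir1} against the solution $f$ itself. Since $f$ a priori only lies in $W^{1,p,v}_0(D)$, and \eqref{e-pde1} is only assumed for $\psi\in C^\infty_0(D)$, I would do this by approximation. First I would fix a sequence $\{f_k\}_{k\in\N}\subset C^\infty_0(D)$ with $f_k\to f$ in the norm of $W^{1,p,v}_0(D)$, so that in particular $v^{1/p}\nabla f_k\to v^{1/p}\nabla f$ in $L^p(\R^n;\R^n)$; by Lemma~\ref{L-4.1} (applicable since $1<p<q$) one also gets $u^{1/q}f_k\to u^{1/q}f$ in $L^q(\R^n)$ and $f\in L^q(D,u\,dx)$. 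Substituting $\psi=f_k$ in \eqref{e-pde1} gives
\begin{equation*}
\int_D \la\nabla f_k,\nabla f\ra\,|\nabla f|^{p-2}\,v\,dx=\int_D f_k\,V f\,|f|^{p-2}\,v\,dx ,
\end{equation*}
and it remains to pass to the limit $k\to\infty$ in each side.

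For the left-hand side I would write $v=v^{1/p}v^{1/p'}$ and read the integrand as the $L^p$–$L^{p'}$ pairing of $v^{1/p}\nabla f_k$ against $v^{1/p'}|\nabla f|^{p-2}\nabla f$. The first factor converges in $L^p$ by construction, and the second belongs to $L^{p'}(\R^n;\R^n)$ because $\int_D v\,|\nabla f|^{(p-1)p'}\,dx=\int_D v\,|\nabla f|^p\,dx<\infty$ (recall $f\in W^{1,p,v}_0(D)$). Hence the left-hand side tends to $\int_D |\nabla f|^p\,v\,dx$.

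For the right-hand side the point is to estimate $\int_D |f_k-f|\,|V|\,|f|^{p-1}\,v\,dx$. Here I would factor the integrand as the product of $|f_k-f|\,u^{1/q}$, of $|V|^{1/p}v^{1/p}u^{-1/q}$, and of $|f|^{p-1}|V|^{1/p'}v^{1/p'}$, and apply H\"older's inequality with the exponents $q$, $r$, $p'$; these are admissible since $\frac1p=\frac1q+\frac1r$ forces $\frac1q+\frac1r+\frac1{p'}=1$. The first factor tends to $0$ in $L^q$; the second lies in $L^r$ by the hypothesis $|V|^{1/p}\in L^r(D,v^{r/p}u^{-r/q}\,dx)$; and the third lies in $L^{p'}$ as soon as $\int_D |f|^p\,|V|\,v\,dx<\infty$, which I would check by one further application of H\"older with exponents $\frac qp$ and $(\frac qp)'=\frac{q}{q-p}=\frac rp$, using $f\in L^q(D,u\,dx)$ together with the same integrability of $V$ (this incidentally also shows that the right-hand side of the asserted identity is absolutely convergent). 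Thus the right-hand side converges to $\int_D f\,V f\,|f|^{p-2}\,v\,dx=\int_D V\,|f|^p\,v\,dx$, and comparing the two limits yields the claim.

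The main obstacle is purely the weight bookkeeping: choosing the splitting of the weights $u$ and $v$ so that all three H\"older factors land in the correct Lebesgue spaces, and in particular verifying the auxiliary bound $\int_D|f|^p|V|\,v\,dx<\infty$ that places the ``borderline'' factor $|f|^{p-1}|V|^{1/p'}v^{1/p'}$ in $L^{p'}$. Once the exponents are matched up via the single identity $\frac1p=\frac1q+\frac1r$ and the hypotheses of Theorem~\ref{T-UC-dir1}, everything else is routine continuity of bilinear pairings, and no structural difficulty is expected.
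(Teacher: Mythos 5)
Your proposal is correct and follows essentially the same route as the paper: test the weak formulation against a smooth approximating sequence for $f$, pass to the limit on the left via the $L^p$--$L^{p'}$ pairing of $v^{1/p}\nabla f_k$ with $v^{1/p'}|\nabla f|^{p-2}\nabla f$, and on the right via Lemma~\ref{L-4.1} together with H\"older; your single three-factor H\"older with exponents $q,r,p'$ is just the paper's two successive two-factor applications collapsed into one, and your explicit check that $\int_D|f|^p|V|\,v\,dx<\infty$ is the same bound the paper uses implicitly. The only cosmetic difference is that the paper approximates $\overline f$ rather than $f$, which matters only if $f$ is complex-valued.
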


\begin{proof}
Let $\{\psi_n\}$ be a sequence of functions in $C^\infty_0(D)$ that converges
to $\overline f$, the complex conjugate of $f$, in $W^{1,p,v}_0(D)$. We show
first that $\lim_{n\to\infty} \int_D \la \nabla \psi_n,\nabla f\ra |\nabla
f|^{p-2} v\,dx= \int_D |\nabla f|^p v\, dx$. Indeed,
\begin{align*}
 &\int_D \bigl(\la \nabla \psi_n,\nabla f\ra |\nabla f|^{p-2} - |\nabla f|^p\bigr)
 v\, dx \\
 {}={}&
  \int_D \bigl(\la \nabla \psi_n,\nabla f\ra |\nabla f|^{p-2} -
  \la \nabla \overline f,\nabla f\ra |\nabla f|^{p-2}\bigr) v\, dx
  \\
 {}={}&
  \int_D \la \nabla \psi_n-\nabla \overline f,\,\nabla f |\nabla f|^{p-2}\ra\,v\, dx
  \\
  \leq & \| ( \nabla \psi_n - \nabla \overline f)v^{\frac 1p}\|_p\,\||\nabla f|^{p-1}\, v^{\frac 1{p'}}\|_{p'} \\
  {}={}&
 \| \nabla (\psi_n - \overline f)v^{\frac 1p}\|_p \,\|\, |\nabla f| \, v^{\frac 1p} \|_{p }^{\frac p{p'}}
\end{align*}
and $\lim_{n\to\infty} \| \nabla (\psi_n - \overline f)v^{\frac 1p}\|_p=0$, as required.

In view of \eqref{e-pde1}, we have that $$\int_D \la \nabla \psi_n,\nabla
f\ra |\nabla f|^{p-2} v\,dx= \int_D \psi_n\,Vf |f|^{p-2} \, v\, dx;$$ to
complete the proof it suffices to show that $$\lim_{n\to\infty} \int_D
\psi_n\,Vf |f|^{p-2} \, v\, dx = \int_D \,V |f|^{p } \, v\, dx$$ when
$|V|^{\frac 1p}\in L^r(D,\, v^{\frac rp}u^{-\frac rq}dx)$. By Lemma
\ref{L-4.1}, $\psi_n$ converges to $\bar f$ in $L^{ q} (D,\, u\,dx)$.
Using Holder's inequality with $ \frac pr+\frac pq =1 $, we gather
\begin{align*}
&\int_D \bigl(\psi_n V f |f|^{p-2} - V |f|^p\bigr)\,v\, dx\leq \int_D |V v
u^{-\frac pq}|\,| f|^{p-1} \, |\psi_n - \bar f | u^{\frac pq}\, \, dx\\
{}\leq{}&\Bigl(\int_D |v V u^{-\frac pq} |^{\frac rp}\,dx\Bigr)^{\frac
pr}\Bigl(\int_D |f|^{(p-1)\frac qp} |\psi_n-\bar f|^{\frac qp}
u\,dx\Bigr)^{\frac pq }\\ {}={}&\Bigl(\int_D (|V|^{\frac 1p} v^{\frac 1p} u^{-\frac
1q} )^{r}\,dx\Bigr)^{\frac pr}\Bigl(\int_D |f|^{ \frac q{p'}} |\psi_n-\bar
f|^{\frac qp} u\,dx\Bigr)^{\frac pq }.
\end{align*}
We let $C= \bigl(\int_D (|V|^{\frac 1p} v^{\frac 1p} u^{-\frac 1q}
)^{r}\,dx\bigr)^{\frac pr}$ and we apply H\"older's inequality (with $\frac
1p+\frac 1{p'}=1$) to the remaining integral. We obtain
\begin{align}
&\int_D \bigl(V f |f|^{p-2}\psi_n - V |f|^p\bigr)\,v\, dx \notag\\
{}\leq{}&C \Bigl(\int_D
|f|^{q}u\, dx\Bigr)^{\frac p{qp'} } \Bigl(\int_D | \psi_n-\bar f|^{q}u\,
dx\Bigr)^{\frac 1{q } } \notag\\
{}={}&C \|f u^{\frac 1q}\|_{q}^{p-1} \|(\psi_n-\bar f ) u^{\frac 1q}\|_{q }.
\label{e-1n}
\end{align}
By assumption, $\lim_{n\to\infty} \|(\psi_n-\bar f) u^{\frac 1q}\|_{q }=0$; by
Lemma \ref {L-4.1}, $\|f u^{\frac 1q}\|_{q} <\infty$, and so the right-hand
side of \eqref{e-1n} goes to zero when $n\to\infty$ as required.
\end{proof}

\medskip
\begin{proof}[Proof of Theorem \ref{T-UC-dir1}] Since the weights $u$ and $v$ are as in Theorem \ref{T1-grad-heinig}, the weighted gradient inequality \eqref{e1-weighted-grad1} holds.
By Lemma \ref{L-GreenG}
and H\"older's inequality (with $ \frac pq+\frac pr=1$) we have the following chain of inequalities
\begin{align*}
\| f u^{\frac 1q}\|_{L^q(D)}^p & \leq c_0^p\|\nabla f\, v^{\frac 1p}\|_{L^p(D)}^p= c_0^p \int_{D } v| \nabla f|^{p}dx
\\
  &=
  c_0^p \int_{D }V v|f|^{p} \ dx
\leq
    c_0 ^p \int_{D }V_+ v u^{-\frac pq}\, |f|^{p} u^{ \frac pq}\ dx
\\
  &\leq
    c_0^p\Bigl(\int_{D }V_+^{\frac rp} v^{\frac rp} u^{-\frac rq}\, dx\Bigr)^{\frac pr}
    \Bigl(\int_D |f|^{q} u \, dx\Bigr)^{\frac pq}\\
    &\leq
    c_0^p\|V_+^{\frac 1p}\|_{L^r(D, \, v^{\frac rp} u^{-\frac rq}\, dx)}^p \| f u^{\frac 1q}\|_{L^q(D)}^p.
\end{align*}
We obtain $\| f u^{\frac 1q}\|_{L^q(D)}\bigr(1- c_0^p\|V_+^{\frac
1p}\|_{L^r(D,\, v^{\frac rp} u^{-\frac rq}\, dx)}\bigr)\leq 0 $; this inequality
is possible only if either $ c_0^p\,\|V_+^{\frac 1p}\|_{L^r(D,\, v^{\frac rp}
u^{-\frac rq}\, dx)} \ge 1$ or $f \equiv 0$ in $D$.
\end{proof}

\bigskip

\section{Linear systems of PDE and the Dirac operator}\label{sec-lin-sys}



We use the following notation: If $\vec p= (p_1, \ldots, p_m) \in \R^m$, we let
$|\vec p\,|= (p_1^2+\ldots+p_m^2)^{\frac 12}$. If $\mathbf{A}$ is a matrix with
rows $A_1, ,\ldots, A_N$, we will let $|\mathbf{A}|=
(|A_1|^2+\ldots+|A_N|^2)^{\frac 12}$. Note that, by Cauchy Schwartz inequality,
$$|\mathbf{A}\vec p\,|=(\la A_1,\, \vec p\,\ra^2+\ldots+ \la A_N,\, \vec
p\,\ra^2)^{\frac 12}\leq (|A_1|^2+\ldots+|A_N|^2)^{\frac 12} |\vec p\,|=|\mathbf{A}|\, |\vec p\,|.
$$

Let $\vec F =(f_1, \ldots, f_N)\in C^\infty_0(\R^n,\, \R^N)$. We denote with
$\nabla \vec F $ the $N\times n$ matrix whose rows are $\nabla f_1, \ldots,
\nabla f_N$.

Unless otherwise specified, we assume that $p$, $q$, $u$ and $v$ are as in
Theorem \ref{T1-grad-heinig}~(a) and that $\frac 1r=\frac 1p-\frac 1q$.

\medskip In this section we use the Carleman inequality
\eqref{e-weighted-grad-tau} to prove unique continuation properties of systems
of linear partial differential equations of the first order.

\subsection{Linear systems of PDE}
   Most of the first order systems considered in the literature are in the form of
  \begin{equation}\label{lin-1}
  \sum_{j=1}^n \mathbf{L}_j (x) \partial_{x_j}\vec F = V(x) \vec F,
  \end{equation}
where $\vec F =(f_1, \ldots, f_N) $ and the $\mathbf{L}_j(x)$ and $V$ are
$M\times N$ matrices defined in a domain $D\subset \R^n$. We let
$\mathbf{L}(x)(\vec F)= \sum_{j=1}^n \mathbf{L}_j (x) \partial_{x_j}\vec F $.
Differential inequalities in the form of
\begin{equation}\label{e5-diffineq- sys-2}
  |\mathbf{L}(x)\vec F|\leq |\mathbf{V}(x) \vec F|
\end{equation}
are also considered. In some of early papers on the subject, it is proved that
solutions of elliptic systems in the form of \eqref{lin-1} that vanish of
sufficiently high order at the origin are $\equiv 0$; see \cite{Cosner,DNi,R}
and the references cited in these papers for definitions of elliptic systems. A
classical method of proof is to reduce the systems to (quasi-) diagonal form;
this approach requires conditions on the regularity and the multiplicity of the
eigenvalues of the system that are often difficult to check; see \cite{Douglis,
Haya, HP, Uryu}. The strong continuation properties of systems of complex
analytic vector fields in the form of $\vec L u=0$ defined on a real-analytic
manifold is proved in \cite{BCP}.

We have found only a few papers in the literature where the Carleman method is
used to prove unique continuation properties of first-order systems. The
Carleman method often allows to prove unique continuation results for the
differential inequality \eqref{e5-diffineq- sys-2}, often with a singular
potential~$V$. In \cite[Theorem 4.1]{DN} Carleman estimates are used to prove
that \eqref{e5-diffineq- sys-2} has the weak unique continuation property when
$\vec L$ is a system of vector fields on a pseudoconcave Cauchy-Riemann (CR)
with some specified conditions and $V$ is bounded. In \cite{O} and \cite{O2} T.
Okaji considers systems in two independent variables, Maxwell's equations, and
the Dirac operator; he proved that the differential inequalities
\eqref{e5-diffineq- sys-2} with $|V(x)|\asymp |x|^{-1}$ has the strong unique
continuation property using sophisticated $L^2\to L^2$ Carleman estimates. See
also \cite{Tamura}, which improves results in \cite{O}.

\medskip

We prove the following
  \begin{Thm}\label{T- UC-ellypt}
  Let $\vec F \in W_0^{1,p, v}(\R^n,\, \R^N )$ be a
  solution of the differential inequality \eqref{e5-diffineq- sys-2}. Assume
  that $\vec F$ satisfies also
  \begin{equation}\label{Cond-ellipt}
  |\nabla \vec F|\lesssim |\mathbf{L}(x)\vec F|.
  \end{equation}
{If $|\mathbf{V}|\in L^{r}(\supp \vec F, \, u^{-\frac rq}v^{\frac rp}\,dx)$,
with $\frac 1p=\frac 1r+\frac 1q$ and $\vec F$ vanishes on one side of a
hyperplane,
then $\vec F\equiv 0$.}
\end{Thm}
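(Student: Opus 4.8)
The plan is to deduce Theorem~\ref{T- UC-ellypt} from a vector-valued version of Theorem~\ref{T1-UC-grad}; the point is that the ellipticity hypothesis \eqref{Cond-ellipt} turns the first-order system into a scalar gradient inequality. First I would combine the two assumptions on $\vec F$. By \eqref{Cond-ellipt}, \eqref{e5-diffineq- sys-2}, and the bound $|\mathbf{V}(x)\vec F|\le|\mathbf{V}(x)|\,|\vec F|$ recorded at the beginning of Section~\ref{sec-lin-sys}, there is a constant $C>0$ with
\[
|\nabla\vec F|\le C\,|\mathbf{L}(x)\vec F|\le C\,|\mathbf{V}(x)|\,|\vec F|\qquad\text{a.e. on }\supp\vec F.
\]
Putting $V:=C\,|\mathbf{V}|$, the field $\vec F$ satisfies the scalar-potential differential inequality $|\nabla\vec F|\le V|\vec F|$ with $V\in L^r(\supp\vec F,\,v^{\frac rp}u^{-\frac rq}\,dx)$ and $\frac1r=\frac1p-\frac1q$, which is precisely the situation of Theorem~\ref{T1-UC-grad} except that the unknown is vector-valued.

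Next I would record the vectorial Carleman inequality. Applying \eqref{e2-weighted-grad} to each scalar component $f_j$ of $\vec F\in C^\infty_0(\R^n,\R^N)$, summing over $j=1,\dots,N$, and using the pointwise bounds $|\vec F|\le\sum_{j=1}^N|f_j|$ and $|\nabla f_j|\le|\nabla\vec F|$, one gets
\[
\|e^{-\tau\ell(\cdot)}u^{\frac1q}|\vec F|\|_q\le N\,c_\tau\,\|e^{-\tau\ell(\cdot)}v^{\frac1p}\,|\nabla\vec F|\|_p,
\]
with $c_\tau$ as in Theorem~\ref{T1-grad-heinig}~(a). By a density argument as in Lemma~\ref{L-4.1}, carried out componentwise, this inequality (and the embedding $\vec F\in L^q(\R^n,u\,dx)$) passes to every $\vec F\in W^{1,p,v}_0(\R^n,\R^N)$.

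With these two ingredients the argument follows the proof of Theorem~\ref{T1-UC-grad} verbatim. Assume, without loss of generality, that $\vec F\equiv0$ for $x_n<0$; it suffices to prove $\vec F\equiv0$ on a strip $S_\epsilon=\{x\colon 0<x_n<\epsilon\}$ and then conclude by a standard connectedness argument in the $x_n$-variable. Using the vectorial Carleman inequality with $\mathbf{a}=(0,\dots,0,1)$, $\tau\ge1$ and $c_\tau\le c_1$ (Remark~\ref{rem-ctau-bound}), splitting $\|e^{-\tau x_n}v^{\frac1p}|\nabla\vec F|\|_{L^p(\R^n)}$ into its contributions over $S_\epsilon$ and over $\{x_n>\epsilon\}$, bounding the former by $\|V v^{\frac1p}u^{-\frac1q}\|_{L^r(S_\epsilon\cap\supp\vec F)}\,\|e^{-\tau x_n}u^{\frac1q}|\vec F|\|_{L^q(S_\epsilon)}$ via $|\nabla\vec F|\le V|\vec F|$ and H\"older with $\frac1p=\frac1q+\frac1r$, and the latter by $e^{-\tau\epsilon}\||\nabla\vec F|v^{\frac1p}\|_{L^p(\{x_n>\epsilon\})}$, one chooses $\epsilon$ so small that $N c_1\|V v^{\frac1p}u^{-\frac1q}\|_{L^r(S_\epsilon\cap\supp\vec F)}<\frac12$ (possible since $|\mathbf{V}|\in L^r(\supp\vec F,\,v^{\frac rp}u^{-\frac rq}\,dx)$). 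Absorbing the first term yields $\frac12\|e^{\tau(\epsilon-x_n)}u^{\frac1q}|\vec F|\|_{L^q(S_\epsilon)}\le C'$ with $C'$ independent of $\tau$, and letting $\tau\to\infty$ forces $\vec F\equiv0$ on $S_\epsilon$, hence $\vec F\equiv0$ on $\R^n$.

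I expect the only non-routine point to be the functional-analytic step in the second paragraph: one must check that the vectorial Carleman inequality and the resulting $L^q(u\,dx)$-embedding survive the passage to the $W^{1,p,v}_0$-closure, so that they may be applied to the given solution $\vec F$ and not merely to test fields. This is handled exactly as in Lemma~\ref{L-4.1}, approximating $\vec F$ in the product Sobolev norm by $C^\infty_0$ vector fields and applying the scalar inequality to each component along the approximating sequence. Everything else is a direct vectorial transcription of the proof of Theorem~\ref{T1-UC-grad}, with the ellipticity assumption \eqref{Cond-ellipt} performing the single extra task of converting $\mathbf{L}(x)\vec F$ into $\nabla\vec F$.
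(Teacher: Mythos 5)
Your proposal is correct and follows essentially the same route as the paper: the paper proves the vectorial Carleman inequality as Lemma~\ref{L-vect-valued} (your second paragraph reproduces its proof with $\mathbf A=\mathbf I$) and then runs exactly the strip/absorption argument of Theorem~\ref{T1-UC-grad}, chaining \eqref{Cond-ellipt} and \eqref{e5-diffineq- sys-2} inside the estimate rather than first merging them into a single scalar inequality $|\nabla\vec F|\le V|\vec F|$ as you do. Your explicit attention to the density step (passing the Carleman inequality from $C^\infty_0$ test fields to $\vec F\in W^{1,p,v}_0$, as in Lemma~\ref{L-4.1}) is a point the paper leaves implicit.
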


In particular, for power weights $u$, $v$ as in
Remark~\textup{\ref{rem-Vepsilon}}, the differential inequality
\eqref{e5-diffineq- sys-2} does not have solutions with compact support support
that satisfy also \eqref{Cond-ellipt} if $V\asymp |x|^{- 1+\epsilon}$ for some
$\epsilon>0$.

\medbreak

Our unique continuation result is weaker than other results in the literature,
but it applies to first-order systems of linear partial differential equations
that satisfy only the assumptions \eqref{Cond-ellipt}. Furthermore, we consider
solutions in weighted Sobolev spaces and potential in weighted $L^r$ spaces
that, to the best of our knowledge, have not been considered in other papers.

  \medskip
Before proving Theorem \ref{T- UC-ellypt} we prove the following Lemma, which is an easy consequence of Theorem \ref{T1-grad-heinig}.
\begin{Lemma}\label{L-vect-valued}
Let ${\mathbf A}$ be a $N\times N$ invertible matrix. Under the assumptions of
Theorem~\textup{\ref{T1-grad-heinig}~(a)}, the following inequality holds for
all $\vec F\in C^\infty_0(\R^n,\, \R^N)$ and $\tau\ge 0$
\begin{equation}\label{e5-weighted-grad}
\|e^{-\tau \ell(x) }u^{\frac 1 q} \vec F \, \|_q\le c_{\tau,N,\mathbf
A}\|e^{-\tau \ell(x)}v^{\frac 1p}\, \mathbf A\nabla \vec F\|_p,
\end{equation}
where $c_{\tau,N,\mathbf A}=NC_{\mathbf A}c_\tau$ and $c_{\tau}$ is
the constant in \eqref{e2-weighted-grad}.
\end{Lemma}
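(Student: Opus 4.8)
The plan is to reduce inequality \eqref{e5-weighted-grad} to the scalar inequality \eqref{e2-weighted-grad} of Theorem \ref{T1-grad-heinig}~(a) applied componentwise. First I would write $\vec F=(f_1,\dots,f_N)$ with each $f_j\in C^\infty_0(\R^n)$, and estimate the left-hand side by passing from the Euclidean norm of $\vec F$ to the sum of the absolute values of its components: since $|\vec F(x)|=\bigl(\sum_{j=1}^N |f_j(x)|^2\bigr)^{1/2}\le \sum_{j=1}^N |f_j(x)|$, we get
\[
\|e^{-\tau\ell(x)}u^{\frac1q}\vec F\|_q\le \sum_{j=1}^N \|e^{-\tau\ell(x)}u^{\frac1q}f_j\|_q.
\]
Then I would apply \eqref{e2-weighted-grad} to each scalar $f_j$, which is legitimate because the weights $u$, $v$ and the exponents $p$, $q$, $\gamma$ satisfy the hypotheses of Theorem \ref{T1-grad-heinig}~(a) and the constant $c_\tau$ there is independent of $\ell$ and of the particular function. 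This yields $\sum_j \|e^{-\tau\ell}u^{1/q}f_j\|_q\le c_\tau\sum_j\|e^{-\tau\ell}v^{1/p}\nabla f_j\|_p$.

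Next I would undo the componentwise split on the gradient side. Using $|\nabla f_j(x)|\le |\nabla\vec F(x)|$ (the $j$-th row of the matrix $\nabla\vec F$ has norm at most the Frobenius-type norm $|\nabla\vec F|=(\sum_k|\nabla f_k|^2)^{1/2}$), we obtain $\sum_{j=1}^N\|e^{-\tau\ell}v^{1/p}\nabla f_j\|_p\le N\|e^{-\tau\ell}v^{1/p}\,\nabla\vec F\|_p$. Combining,
\[
\|e^{-\tau\ell(x)}u^{\frac1q}\vec F\|_q\le N c_\tau\|e^{-\tau\ell(x)}v^{\frac1p}\,\nabla\vec F\|_p.
\]

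Finally I would bring in the invertible matrix $\mathbf A$. Writing $\nabla\vec F=\mathbf A^{-1}(\mathbf A\nabla\vec F)$ and using the submultiplicativity bound $|\mathbf B\vec p\,|\le|\mathbf B|\,|\vec p\,|$ recorded just before the statement (applied with $\mathbf B=\mathbf A^{-1}$, pointwise in $x$ and row by row), we have $|\nabla\vec F(x)|\le |\mathbf A^{-1}|\,|\mathbf A\nabla\vec F(x)|$ for every $x$, where $|\mathbf A^{-1}|$ is the constant (Frobenius) norm of $\mathbf A^{-1}$. Substituting this into the previous display gives \eqref{e5-weighted-grad} with $c_{\tau,N,\mathbf A}=N|\mathbf A^{-1}|\,c_\tau$; setting $C_{\mathbf A}=|\mathbf A^{-1}|$ matches the stated form of the constant. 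There is no serious obstacle here — the only points requiring a line of care are verifying that the norm $|\mathbf A^{-1}|$ is finite (which is exactly the invertibility of $\mathbf A$) and that the row-by-row application of the Cauchy–Schwarz bound is consistent with the definition $|\mathbf A\nabla\vec F|=(\sum_k|\text{row}_k|^2)^{1/2}$; both are immediate from the conventions set up at the start of Section \ref{sec-lin-sys}.
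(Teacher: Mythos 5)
Your proposal is correct and follows essentially the same route as the paper: split $|\vec F|\le\sum_j|f_j|$, apply the scalar inequality \eqref{e2-weighted-grad} to each component, recombine via $|\nabla f_j|\le|\nabla\vec F|$ to pick up the factor $N$, and then absorb $\mathbf A$ using $|\nabla\vec F|\le C_{\mathbf A}|\mathbf A\nabla\vec F|$, which is exactly the paper's lower bound $|\mathbf A\vec\xi|\ge C_{\mathbf A}^{-1}|\vec\xi|$ with your explicit choice $C_{\mathbf A}=|\mathbf A^{-1}|$. The only cosmetic quibble is that the Cauchy--Schwarz bound for $\mathbf A^{-1}(\mathbf A\nabla\vec F)$ is applied column by column of the $N\times n$ matrix rather than row by row, but this does not affect the argument.
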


\begin{proof}
Using Theorem \ref{T1-grad-heinig}~(a), the elementary
inequalities
\[
|\vec F|=(f_1^2+\ldots+f_N^2)^{\frac 12}\le |f_1|+\ldots+|f_N|,\quad |f_{j}|\le
|\vec F|,
\]
and Minkowsky's inequality, we obtain
\begin{align*}
\|e^{-\tau\ell(x) }u^{\frac 1 q} \vec F \, \|_q &\leq \sum_{j=1}^N \|e^{-\tau \ell(x)
}u^{\frac 1 q} f_j \, \|_q
\leq c_\tau\sum_{j=1}^N \|e^{-\tau \ell(x) }v^{\frac 1p} \nabla f_j \, \|_p
\\
&\leq c_\tau N \|e^{-\tau \ell(x) }v^{\frac 1p} \nabla \vec F \,
\|_p.
\end{align*}

If $\mathbf A$ is invertible, then, for every $\xi\in\R^n$, we have that
$|\mathbf A \vec \xi|\ge C_{\mathbf A}^{-1}|\xi| $ for some $C_{\mathbf A}>0$;
thus,
\[
\|e^{-\tau \ell(x) }u^{\frac 1 q} \vec F \, \|_q \leq c_\tau N C_{\mathbf
A}\|e^{-\tau \ell(x) }v^{\frac 1p}{\mathbf A} \nabla \vec F \, \|_p
\]
as required.
\end{proof}

\begin{proof}[Proof of Theorem \ref{T- UC-ellypt}]
We argue as in the proof of Theorem~\ref{T1-UC-grad}. Without loss
of generality, we can assume that $\vec F\equiv 0$ when $x_n<0$ and
$\mathbf{A}=\mathbf{I}$, where $\mathbf I$ is the $N\times N$ identity matrix.
For simplicity of notation, we  denote  with $c_{1}$ the constant $c_{1,N,\,\mathbf{I}}$
in Lemma \ref{L-vect-valued}. We show that $\vec F\equiv 0$ also on the strip $S_\epsilon= \{ x\colon
0<x_n<\epsilon\}$, for some $\epsilon>0$ to be determined during the proof.

Using \eqref{e5-weighted-grad} with $\ell(x)= x_n$ and  $\tau\ge 1$,
the differential inequality \eqref{Cond-ellipt}, H\"older's inequality and Remark~\ref{rem-ctau-bound}, we obtain
\begin{align*}
&\|e^{-\tau x_n} \vec F u^{\frac 1q}\|_{L^q(S^\epsilon)}\\
{}\leq{}&c_{1}\|e^{-\tau x_n}\nabla \vec F\,v^{\frac 1p}\|_{L^p(\R^n)}\\
{}\leq{}&c_{1}\|e^{-\tau x_n}\nabla \vec F\, v^{\frac 1p}\|_{L^p(S_\epsilon)} +
c_{1}\|e^{-\tau x_n}\nabla \vec F\, v^{\frac 1p}\|_{L^p(\{x_n>\epsilon\})}\\
{}\leq{}&c_{1}C\|e^{-\tau x_n} \mathbf{L}(x)(\nabla \vec F) \,v^{\frac
1p}\|_{L^p(S_\epsilon)} + c_{1}e^{-\tau \epsilon }\|\nabla \vec F v^{\frac
1p}\|_{L^p(\{x_n>\epsilon\})}\\
{}\leq{}&c_{1}C\|e^{-\tau x_n} \mathbf{V} \vec F \,v^{\frac
1p}\|_{L^p(S_\epsilon)} + c_{1}e^{-\tau \epsilon }\|\nabla \vec F v^{\frac
1p}\|_{L^p(\{x_n>\epsilon\})}\\
{}\leq{}&c_{1}C\||\mathbf{V} |\, v^{\frac 1p} u^{-\frac
1q}\|_{L^r(S_\epsilon\cap \supp \vec F)}\|e^{-\tau x_n} \vec F u^{\frac
1q}\|_{L^q(S^\epsilon)} + C'e^{-\tau \epsilon },
\end{align*}
where we have let $C'=c_{1}\|\nabla \vec F\, v^{\frac
1p}\|_{L^p(\{x_n>\epsilon\})}$.

{Since $|\mathbf{V}|\in L^{r}(\supp \vec F, \, u^{-\frac rq}v^{\frac rp}\,dx)$
we can chose $\epsilon>0$ so that} $c_{1}C\||\mathbf{V} | \,v^{\frac 1p}
u^{-\frac 1q}\|_{L^r(S_\epsilon\cap \supp \vec F)} <\frac 12$. We have obtained
$$
\|e^{-\tau x_n} \vec F u^{\frac 1q}\|_{L^q(S^\epsilon)}\leq \frac 12\,\|e^{-\tau
x_n} \vec F u^{\frac 1q}\|_{L^q(S^\epsilon)}+ C'e^{-\tau \epsilon }.
$$
In view of  $\epsilon-x_n>0$ on $S^\epsilon$, the left-hand side of this inequality
goes to infinity with $\tau$ unless $\vec F \equiv 0$ on $S_\epsilon$; this is
a contradiction because $C'$ does not depend on $\tau$, and so $\vec F \equiv
0$ in $S_\epsilon$.
\end{proof}

\medskip

Let $\mathbf{G}_1(x), \dots , \mathbf{G}_n(x)$ be $N\times n$ matrices defined
on a domain $D\subset\R^n$. We consider the operator $$\mathbf{G}(\vec F
)=\mathbf{G}(f_1, \ldots, f_N)= \sum_{j=1}^N \mathbf{G}_j (x) f_j$$ with $
f_j\in C^\infty_0(D)$.

In \cite{LNP}, systems in the form of $\nabla F= \mathbf{G} \vec F$ are
considered. These systems can be used to model linear elasticity (in
curvilinear coordinates) of linearly elastic shells. See \cite{CM} and the
references cited there. We prove the following
\begin{Thm}\label{T-Uc-Sys-1}
     Let $\vec F \in W_0^{1,p, v}(D,\, \R^N )$ be a
  solution of the differential inequality
\begin{equation}\label{e5-system-1}
|\nabla \vec F|\lesssim |\G \vec F |.
\end{equation}
If $|\G|\in L^{r}(\supp \vec F, \, u^{-\frac rq}v^{\frac rp}\,dx
)$,  with $\frac 1p=\frac 1r+\frac 1q$, and $\vec F$
vanishes on one side of a hyperplane,
%
then $\vec F\equiv 0$.
\end{Thm}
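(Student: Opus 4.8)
The plan is to follow the same argument used for Theorem~\ref{T- UC-ellypt}, since Theorem~\ref{T-Uc-Sys-1} is the special case obtained by taking $\mathbf{L}_j(x)=\mathbf{e}_j$ (so that $\mathbf{L}(x)(\vec F)=\nabla\vec F$ trivially satisfies the ellipticity-type condition \eqref{Cond-ellipt}), replacing $\mathbf{V}$ by $\mathbf{G}$, and replacing \eqref{e5-diffineq- sys-2} by \eqref{e5-system-1}. Concretely: assume for simplicity that $\vec F\equiv 0$ on $\{x_n<0\}$, fix $\mathbf{a}=(0,\dots,0,1)$ and $\ell(x)=x_n$, and aim to show $\vec F\equiv 0$ on a thin strip $S_\epsilon=\{0<x_n<\epsilon\}$; connectedness of $D$ then propagates this to all of $D$ by a standard covering argument.

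The main chain of inequalities runs as follows. First apply Lemma~\ref{L-vect-valued} with $\mathbf{A}=\mathbf{I}$ (writing $c_1$ for $c_{1,N,\mathbf{I}}$) and $\tau\ge 1$, using Remark~\ref{rem-ctau-bound} to bound $c_\tau\le c_1$, to get $\|e^{-\tau x_n}\vec F\,u^{1/q}\|_{L^q(S_\epsilon)}\le c_1\|e^{-\tau x_n}\nabla\vec F\,v^{1/p}\|_{L^p(\R^n)}$. Split the right-hand integral over $S_\epsilon$ and over $\{x_n>\epsilon\}$; on the latter $e^{-\tau x_n}\le e^{-\tau\epsilon}$, producing a term $c_1 e^{-\tau\epsilon}\|\nabla\vec F\,v^{1/p}\|_{L^p(\{x_n>\epsilon\})}=:C'e^{-\tau\epsilon}$ independent of $\tau$. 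On $S_\epsilon$, use \eqref{e5-system-1} to replace $|\nabla\vec F|$ by $\lesssim|\mathbf{G}\vec F|\le|\mathbf{G}|\,|\vec F|$ (the last bound is the Cauchy--Schwarz inequality for matrices recorded at the start of the section), then apply H\"older with $\frac1p=\frac1q+\frac1r$ to factor out $\||\mathbf{G}|\,v^{1/p}u^{-1/q}\|_{L^r(S_\epsilon\cap\supp\vec F)}$ times $\|e^{-\tau x_n}\vec F\,u^{1/q}\|_{L^q(S_\epsilon)}$. Since $|\mathbf{G}|\in L^r(\supp\vec F,\,u^{-r/q}v^{r/p}\,dx)$, absolute continuity of the integral lets us choose $\epsilon>0$ small enough that $c_1 C\,\||\mathbf{G}|\,v^{1/p}u^{-1/q}\|_{L^r(S_\epsilon\cap\supp\vec F)}<\tfrac12$, where $C$ absorbs the implicit constant in \eqref{e5-system-1}. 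This yields $\|e^{-\tau x_n}\vec F\,u^{1/q}\|_{L^q(S_\epsilon)}\le\tfrac12\|e^{-\tau x_n}\vec F\,u^{1/q}\|_{L^q(S_\epsilon)}+C'e^{-\tau\epsilon}$, hence $\tfrac12\|e^{\tau(\epsilon-x_n)}\vec F\,u^{1/q}\|_{L^q(S_\epsilon)}\le C'$; letting $\tau\to\infty$, since $\epsilon-x_n>0$ on $S_\epsilon$ and $C'$ is independent of $\tau$, we conclude $\vec F\equiv 0$ on $S_\epsilon$.

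One point that needs a remark is the reduction ``$\mathbf{A}=\mathbf{I}$ without loss of generality'' and ``$\vec F$ vanishes on one side of a hyperplane'' $\Rightarrow$ ``$\vec F\equiv 0$ on $\{x_n<0\}$'': the first is immaterial here since no matrix $\mathbf{A}$ appears in \eqref{e5-system-1}, and the second is arranged by an affine change of coordinates (rotation plus translation), under which $\nabla\vec F$, the class $W_0^{1,p,v}$, and the weighted $L^r$ membership of $|\mathbf{G}|$ all transform in a controlled way; one should note the weights $u,v$ are radial so this is harmless, exactly as in the proof of Theorem~\ref{T1-UC-grad}. The only genuine obstacle — and it is the same one as in Theorem~\ref{T- UC-ellypt} — is making sure Lemma~\ref{L-vect-valued} applies to the \emph{distributional} solution $\vec F\in W_0^{1,p,v}(D,\R^N)$ rather than to $C^\infty_0$ functions: this is handled by density, approximating $\vec F$ by $C^\infty_0$ functions in the $W^{1,p,v}_0$ norm exactly as in Lemma~\ref{L-4.1}, so that \eqref{e5-weighted-grad} passes to the limit. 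With that in hand the argument is complete.
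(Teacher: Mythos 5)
Your proof is correct and follows essentially the same route as the paper: the Carleman estimate of Lemma~\ref{L-vect-valued} with $\ell(x)=x_n$ and $\tau\ge 1$, the split over $S_\epsilon$ and $\{x_n>\epsilon\}$, absorption of the $S_\epsilon$ term by choosing $\epsilon$ small via the $L^r$ hypothesis on $|\G|$, and letting $\tau\to\infty$. The only (immaterial) difference is that you bound $|\G\vec F|\le|\G|\,|\vec F|$ by a single Cauchy--Schwarz step before applying H\"older once, whereas the paper splits into the components $\sum_j|\G_j|\,|f_j|$ and applies H\"older term by term, picking up a factor $N$; both yield the same absorption estimate.
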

\begin{proof}
Assume for simplicity
that $\vec F\equiv 0$ when $x_n<0$ (the proof is similar in the general case).
We show that $\vec F\equiv 0$ also on the strip $0<x_n<\epsilon$, for some
$\epsilon>0$ to be determined during the proof.   As  in the proof of  Theorem \ref{T- UC-ellypt}, we use
\eqref{e5-weighted-grad} with $\mathbf{A}=\mathbf{I}$,  $\ell(x)= x_n$ and $\tau\ge 1$.
 For each $j=1,\ldots,N$, we use the differential
inequality \eqref{e5-system-1} and H\"older's inequality in the
following chain of inequalities
\begin{align*}
&\|e^{-\tau x_n} \vec F u^{\frac 1q}\|_{L^q(S^\epsilon)}\\
{}\leq{}&c_{1}\|e^{-\tau x_n}\nabla \vec F\,v^{\frac 1p}\|_{L^p(\R^n)}
\\
{}\leq{}&c_{1}\|e^{-\tau x_n}\nabla \vec F\, \vec v^{\frac 1p}\|_{L^p(S_\epsilon)} + c_{1}\|e^{-\tau x_n}\nabla \vec F\, v^{\frac 1p}\|_{L^p(\{x_n>\epsilon\})}
\\
{}\leq{}&c_{1}C\|e^{-\tau x_n} \G \vec F \,v^{\frac 1p}\|_{L^p(S_\epsilon)} + c_{1}e^{-\tau \epsilon }\|\nabla \vec F v^{\frac 1p}\|_{L^p(\{x_n>\epsilon\})}
\\
{}\leq{}& c_{1}C\sum_{j=1}^N \|e^{-\tau x_n} |\G_j| f_j \,v^{\frac 1p}\|_{L^p(S_\epsilon)} + c_{1}e^{-\tau \epsilon }\|\nabla \vec F v^{\frac 1p}\|_{L^p(\{x_n>\epsilon\})}
\\
{}\leq{}&c_{1}C\sum_{j=1}^N \||\G_j | \,v^{\frac 1p} u^{-\frac
1q}\|_{L^r(S_\epsilon\cap \supp \vec F)}\|e^{-\tau x_n} f_ju^{\frac 1q}\|_{L^q(S^\epsilon)} +
C'e^{-\tau \epsilon }
\\
{}\leq{}&c_{1}CN\||\G| \,v^{\frac 1p} u^{-\frac
1q}\|_{L^r(S_\epsilon\cap \supp \vec F)}\|e^{-\tau x_n} |\vec F| u^{\frac
1q}\|_{L^q(S^\epsilon)} + C'e^{-\tau \epsilon
},
\end{align*}
where we have let $C'=c_{1}\|\nabla \vec F\, v^{\frac
1p}\|_{L^p(\{x_n>\epsilon\})}$.

 We chose $\epsilon>0$ so that $c_{1}CN\||\G| \,v^{\frac 1p} u^{-\frac
1q}\|_{L^r(S_\epsilon\cap \supp \vec F)} <\frac 12$.
 We gather
 $$
 \|e^{-\tau x_n} \vec F u^{\frac 1q}\|_{L^q(S^\epsilon)}\leq
 \frac 12\,\|e^{-\tau x_n} \vec F u^{\frac 1q}\|_{L^q(S^\epsilon)}+ C'e^{-\tau \epsilon}
 $$
 which gives
 $$
 \frac 12\,\|e^{ \tau (\epsilon-x_n)} \vec F u^{\frac 1q}\|_{L^q(S^\epsilon)} \leq
 C',
 $$
and we can conclude the proof as in Theorem \ref{T- UC-ellypt}.
\end{proof}

\medskip
\begin{Rem}
It is shown in \cite{LNP} that the $W^{1,1}(D, \, \R^n)$ solutions of the system $\nabla
\vec F= \G \vec F$, with $\G \in L^1(D, \R^{(n\times n)\times n})$, cannot
vanish on an open set. The  proof in \cite{LNP} does not use Carleman inequalities.
\end{Rem}

\subsection{The Dirac operator}\label{ssec-dirac}
Let $\alpha _{j}$, $j=0,\ldots,n$, be $N\times N$ matrices which satisfy the
following relations.
\begin{equation}\label{e1-Clifford}
\alpha_{j}^{*}=\alpha_{j},\quad
\alpha_{j}^{2}=I,\quad
\alpha_{j}\alpha _{k}+\alpha _{k}\alpha_{j}=0,\quad j\neq k
\end{equation}
(we also say that the $\alpha _{j} $ form a basis of a Clifford algebra). It is
known that for \eqref{e1-Clifford} to hold, $N$ must be in the form
$2^{[\frac{n+1}{2}]}m$, with $m>0 $ integer

The ($n$-dimensional) Dirac operator associated to the matrices $\alpha_j$ is a
matrix value operator, initially defined on $C^\infty_0(\R^n,\, \R^{N\times N})$
as follows.
\begin{equation*}\label{e2-dirac}
\mathcal{L}U=-i\sum_{j=1}^{n}\alpha _{j}\partial _{x_{j}}U.
\end{equation*}
Here, $\partial _{x_{i}}U$ is a matrix whose entries are the partial derivative of the entries of $U$.
We can use \eqref{e1-Clifford} to show that $\mathcal{L}\circ \mathcal{L} U=-\Delta U\, I$, where $I$ is the identity matrix.
When $U=fI$, where $f\in C^\infty_0(\R^n)$, we can see at once that
$(\mathcal{L}(fI))^2 = -I|\nabla f|^2,
$
 Thus, a Dirac operators can be viewed as a generalization of the gradient operator and a square root of the Laplacian.

There is a lot of literature on the Dirac operator and its role in several domains of mathematics and physics See e.g.
\cite{Cnops}. For example, the Dirac equation which describes free relativistic electrons
is represented by
$$
i\hbar \partial _{t}\psi(t,x)=H_{0}\psi(t,x),
$$
where $H_{0}$ is given explicitly by the $4\times 4$ matrix-valued differential
expression
$$
H_{0}=-i\hbar c\sum_{j=1}^{3}\alpha _{j}\partial _{x_{j}}
 +\alpha _{0}mc^2.
$$
Here, $c$ is the speed of light, $m$ is a mass of a particle and
$\hbar$ is the Planck's constant.

In \cite{DO} is proved that the the differential inequality
\begin{equation}\label{e-diff-Dirac}
|\mathcal{L}U|\leq |V U|
\end{equation}
 where $V(x)$ is a $N\times N$ matrix,
 has the strong unique continuation property from the origin whenever $V(x)\leq C|x|^{-1}$, with $0\leq C\leq 1$. It is also proved in \cite{DO} that the condition $C\leq 1$ cannot be improved. See also \cite{KY} and the references cited there.
 We prove the following

 \begin{Thm}\label{T- UC-Dir}
  Let $ f \in W_0^{1,p, v}(D)$ be a
  solution of the differential inequality
\eqref{e-diff-Dirac}. If $|\mathbf{V}|\in L^{r}(\supp f, \, u^{-\frac
rq}v^{\frac rp}\,dx )$ with $ \frac 1p=\frac 1r+\frac 1q$ and $ f$ vanishes on
one side of a hyperplane, then $f\equiv 0$.
\end{Thm}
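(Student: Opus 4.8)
The plan is to reduce Theorem~\ref{T- UC-Dir} to the scalar unique continuation result Theorem~\ref{T1-UC-grad}, exploiting the observation made just before the statement that on matrices of the form $U=fI$ the Dirac operator reproduces the gradient up to a dimensional factor. First I would fix the meaning of the hypothesis: ``$f\in W^{1,p,v}_0(D)$ solves \eqref{e-diff-Dirac}'' means $U=fI$ solves $|\mathcal{L}U|\le|VU|$. Then $\mathcal{L}(fI)=-i\sum_{j=1}^n(\partial_{x_j}f)\,\alpha_j$, and using the Clifford relations \eqref{e1-Clifford} one gets $(\mathcal{L}(fI))^{*}\mathcal{L}(fI)=\sum_{j,k}(\partial_{x_j}f)(\partial_{x_k}f)\,\alpha_j\alpha_k=|\nabla f|^{2}I$: the off-diagonal terms cancel because $\alpha_j\alpha_k+\alpha_k\alpha_j=0$ for $j\ne k$, while $\alpha_j^{2}=I$ leaves $\sum_j(\partial_{x_j}f)^{2}I$. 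Taking the (Frobenius) matrix norm fixed in Section~\ref{sec-lin-sys} then yields the pointwise identity $|\mathcal{L}(fI)|=\sqrt{N}\,|\nabla f|$.

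Next I would rewrite the differential inequality. Since $VU=V(fI)=fV$ and $|fV|=|f|\,|\mathbf V|$, the hypothesis \eqref{e-diff-Dirac} becomes $\sqrt{N}\,|\nabla f|\le|\mathbf V|\,|f|$, i.e. $|\nabla f|\le W|f|$ with $W:=N^{-1/2}|\mathbf V|$. The assumption $|\mathbf V|\in L^{r}(\supp f,\,u^{-r/q}v^{r/p}\,dx)$ gives $W\in L^{r}(\supp f,\,v^{r/p}u^{-r/q}\,dx)$; extending $f$ by zero we may view $f\in W^{1,p,v}_0(\R^n)$, and the hypothesis that $f$ vanishes on one side of a hyperplane means exactly that $\supp f\subset\{x\colon\ell(x)\ge 0\}$ for a suitable affine $\ell$ (after replacing $\ell$ by $-\ell$ if needed). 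All the hypotheses of Theorem~\ref{T1-UC-grad} are then met with potential $W$---and that theorem imposes no smallness assumption on the potential---so $f\equiv 0$.

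For completeness I would also record the variant for genuinely matrix-valued $U$, bypassing the substitution $U=fI$: the symbol $\sigma(\xi)=\sum_j\alpha_j\xi_j$ of $\mathcal{L}$ satisfies $\sigma(\xi)^{2}=|\xi|^{2}I$ by \eqref{e1-Clifford}, hence $\sigma(\xi)$ is invertible for $\xi\ne 0$ with $|\sigma(\xi)^{-1}|\lesssim|\xi|^{-1}$; this is precisely the ellipticity condition \eqref{Cond-ellipt} for $\mathbf L=\mathcal{L}$, now acting entrywise on $U$ (regarded as an $\R^{N^{2}}$-valued function). The proof of Theorem~\ref{T- UC-ellypt}, which uses only the vector-valued Carleman estimate \eqref{e5-weighted-grad} of Lemma~\ref{L-vect-valued} and is insensitive to the dimension of the target space, then applies with no change.

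I do not expect a genuine obstacle: the argument is essentially bookkeeping on top of Theorem~\ref{T1-UC-grad} (equivalently Theorem~\ref{T- UC-ellypt}). The one step that is not purely formal is the identity $(\mathcal{L}(fI))^{*}\mathcal{L}(fI)=|\nabla f|^{2}I$, equivalently $\sigma(\xi)^{2}=|\xi|^{2}I$: this is exactly where the anticommutation relations in \eqref{e1-Clifford} are used essentially, and without them $\mathcal{L}$ would not control $|\nabla f|$ from below and the reduction would break down.
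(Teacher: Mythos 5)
Your proposal is correct and follows essentially the same route as the paper: use the Clifford relations \eqref{e1-Clifford} to show that $|\mathcal{L}(fI)|$ controls $|\nabla f|$ pointwise, so that \eqref{e-diff-Dirac} with $U=fI$ reduces to the scalar inequality $|\nabla f|\lesssim |\mathbf V|\,|f|$, and then invoke Theorem~\ref{T1-UC-grad} (equivalently, repeat the Carleman argument of Theorem~\ref{T- UC-ellypt}). In fact your version is slightly cleaner than the paper's sketch, since you derive the exact identity $|\mathcal{L}(fI)|=\sqrt{N}\,|\nabla f|$ where the paper's displayed inequality $|\nabla f|\le|\mathcal{L}(fI)|^{2}$ is only a (dimensionally imprecise) shorthand for the same ellipticity bound.
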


\begin{proof} Since $\mathcal{L}(fI) \cdot \mathcal{L}(fI)= -I|\nabla f|^2$, we can see at once that $$|\nabla f|=|\mathcal{L}(fI) \cdot \mathcal{L} (fI)|\leq |\mathcal{L}(fI)|^2
$$
With this observation, the proof of Theorem \ref{T- UC-Dir} is almost a line-by-line repetition of the proof of Theorem \ref{T- UC-ellypt}.
We leave the details to the reader.
\end{proof}

\medskip

\end{document}